\date{31st March 2021}
\newcommand{\enk}{\ensuremath{\mathcal{E}_{\enklet{},k}}}
\newcommand{\enklet}{m}
\renewcommand{\leq}{\leqslant}
\renewcommand{\geq}{\geqslant}
\newcommand{\abs}[1]{\ensuremath{\lvert{} #1 \rvert{}}}
\newcommand{\floor}[1]{\ensuremath{\lfloor{} #1 \rfloor{}}}
\newcommand{\ceil}[1]{\ensuremath{\lceil{} #1 \rceil{}}}
\newtheorem{theorem}{Theorem}[section]
\newtheorem{lemma}[theorem]{Lemma}
\theoremstyle{definition}
\newtheorem{dfn}[theorem]{Definition}
\title{On the extremal function for graph minors}
\author{Andrew Thomason$^1$}
\author{Matthew Wales$^1$}
\address{$^1$DPMMS, University of Cambridge, CB3 0WB, United Kingdom}
\begin{document}

\begin{abstract}
  For a graph $H$, let
  $c(H)=\inf\{c\,:\,e(G)\geq c|G| \mbox{ implies } G\succ H\,\}$, where
  $G\succ H$ means that $H$ is a minor of $G$. We show that if $H$ has
  average degree $d$, then
$$
c(H)\le (0.319\ldots+o_d(1))|H|\sqrt{\log d}
$$
where $0.319\ldots$ is an explicitly defined constant.
This bound matches a corresponding lower bound shown to hold for almost all
such $H$ by Norin, Reed, Wood and the first author.
\end{abstract}

\maketitle

\section{Introduction}

A graph $H$ is a \textsl{minor} of $G$, written $G\succ H$, if there exist
non-empty disjoint subsets $U_v\subset V(G)$, $v\in V(H)$, such that each
$G[U_v]$ is connected, and there is an edge in $G$ between $U_v$ and $U_w$
whenever $vw$ is an edge in~$H$. Thus $H$ can be obtained from $G$ by a
sequence of edge contractions and deletions and vertex deletions.
	
It is natural to ask, for a given $H$, what conditions on a graph $G$
guarantee that it contains $H$ as a minor. Mader~\cite{Mader1967} proved
the existence of the extremal function~$c(H)$, defined as follows.

\begin{dfn}
  For a graph $H$, let
  $c(H) = \inf \{\, c\,:\, e(G)\geq c|G| \mbox{ implies } G\succ H\,\}$.
\end{dfn}

Mader~\cite{Mader68} later proved the bound $c(K_t)\le 8t\log_2t$ for the
complete graph $H=K_t$. Bollob\'as, Catlin and Erd\H{o}s~\cite{BollCatErd}
realised that random graphs $G=G(n,p)$ give a good lower bound for
$c(K_t)$. Indeed, by choosing $n$ and $p$ suitably, one obtains
$c(K_t)\ge(\alpha+o(1))t\sqrt{\log t}$, where the constant $\alpha$ is
described here.
\begin{dfn} The constant $\alpha$ is given by
$$
\alpha\,=\,\max_{0<p<1} {p/2\over\sqrt{\log(1/(1-p))}}\,=\,0.319\ldots\,,
$$
with $p=0.715\ldots$
giving the maximum value.
\end{dfn}

Kostochka~\cite{Kostochka1,Kostochka2} (see also~\cite{thom84}) proved that
$c(K_t)$ is in fact of order $t\sqrt{\log t}$. Finally, it was
shown~\cite{thom01} that $c(K_t)=(\alpha+o(1))t\sqrt{\log t}$.

Myers and Thomason~\cite{myersthom} considered general graphs $H$ with $t$
vertices and at least $t^{1+\tau}$ edges.  They defined a graph parameter
$\gamma(H)$ and proved that \\$c(H)=(\alpha\gamma(H)+o(1)) t\sqrt{\log
  t}$. We say more about $\gamma(H)$ in~\S\ref{secfuture}, but here it is
enough to say that $\gamma(H)\leq\sqrt{\tau}$, and in fact
$c(H)=(\alpha\sqrt{\tau}+o(1)) t\sqrt{\log t}$ for almost all $H$ with
$t^{1+\tau}$ edges, and for all regular $H$ of this kind.
Myers~\cite{Myers11} further showed that the extremal graphs are all
essentially disjoint unions of pseudo-random graphs (in the sense
of~\cite{thomasonpseudo}) having the same order and density as the random
graphs discussed above. Thus $c(H)$ is determined very precisely when
$\tau$ is bounded away from zero, but these results give little useful
information when $\tau=o(1)$.

Reed and Wood~\cite{reedwood,reedwoodcorrig} realised that it is better to
express $c(H)$ in terms of the average degree $d$ of~$H$. For example, the
results just mentioned imply that $c(H)\le(\alpha+o(1)) |H|\sqrt{\log d}$
if $\log d \ne o(\log |H|)$, with equality in many cases. Reed and Wood
showed that $c(H)\le 1.9475|H|\sqrt{\log d}$ holds for all~$H$, provided
$d$ is large.

The actual behaviour of $c(H)$ can be qualitatively different, though, when
$\log d = o(\log |H|)$, because random graphs themselves cannot serve as
extremal graphs. In fact, Alon and F\"uredi~\cite{AlonFuredi} showed that,
if the maximum degree of $H$ is at most $\log_2 |H|$, then the random graph
$G(|H|,p)$ almost surely contains $H$ as a {\em spanning} subgraph when
$p>1/2$. Indeed, $c(H)$ can be much smaller than $|H|\sqrt{\log d}$, even
if $H$ is regular: Hendrey, Norin and Wood~\cite{HNW} have shown that
$c(H)=O(|H|)$ when $H$ is a hypercube.

But this kind of behaviour turns out to be rare. Norin, Reed, Thomason and
Wood~\cite{NRTW} recently found a different class of graphs that can serve
as extremal graphs. These are blowups of small random graphs but are not
themselves random (though they are pseudo-random). Their method showed that
$c(H)\ge (\alpha +o_d(1))|H|\sqrt{\log d}$ holds for almost all $H$ of
average degree~$d$. (More exactly, given $\epsilon>0$ and
$d>d_0(\epsilon)$, then for each $t>d$, when $H$ is chosen at random with
$t$ vertices and average degree~$d$, we have
$\Pr[c(H)> (\alpha -\epsilon)t\sqrt{\log d}] > 1-\epsilon$.)  The bound on
$c(H)$ was conjectured to be tight; that is,
$c(H)= (\alpha +o_d(1))|H|\sqrt{\log d}$ almost always.

Our main purpose here is to prove the next theorem, which strengthens the
result of Reed and Wood~\cite{reedwood} and, in particular, settles the
aformentioned conjecture positively.

\begin{theorem}
\label{T:key}
  Given $\epsilon>0$, there exists $D(\epsilon)$, such that if
  $d>D(\epsilon)$ then every graph $H$ of average degree $d$ satisfies
  $c(H) < (\alpha+\epsilon)|H|\sqrt{\log d}$.
\end{theorem}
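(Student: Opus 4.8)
My plan is to run a minimal‑counterexample argument in the style of Thomason's proof that $c(K_t)=(\alpha+o(1))t\sqrt{\log t}$, but with the target structure being a model of $H$ rather than of a complete graph; the decisive point is that the union bounds governing that argument then involve the average degree $d$ in place of $t:=|H|$, which is exactly what turns $\sqrt{\log t}$ into $\sqrt{\log d}$. Suppose the theorem fails for some $\epsilon>0$: then for arbitrarily large $d$ there is a graph $H$ of average degree $d$ and a graph $G$ with $G\not\succ H$ and $e(G)\geq c|G|$, where $c:=(\alpha+\epsilon)t\sqrt{\log d}$; fix such a $G$ minor‑minimal. Minor‑minimality gives the usual workable structure: $\delta(G)\geq c$ (deleting a vertex of smaller degree would leave a smaller counterexample), and the standard density‑boosting argument yields a subgraph of $G$ of comparable density that is a good vertex‑expander. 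On the side of $H$ I first dispose of an easy range: if $\log t\leq\epsilon^{-1}\log d$ then $H$ has at least $t^{1+\tau}$ edges with $\tau=\log(d/2)/\log t$ bounded away from $0$, so the Myers--Thomason formula $c(H)=(\alpha\gamma(H)+o(1))t\sqrt{\log t}$ together with $\gamma(H)\leq\sqrt\tau$ already gives $c(H)\leq(\alpha+o(1))t\sqrt{\log d}<c$ once $d$ is large. Hence we may assume $\log d=o(\log t)$, so that $d$ and every fixed power of $\log d$ are negligible beside any power of $t$. Finally it is convenient to assume $H$ has minimum degree at least $(\log d)^{2}$ -- strip off the vertices of smaller degree (at most $t/\log d$ of them, dropping the average degree by $o(d)$) and re‑attach them at the very end as singleton branch sets, each of which must reach only $(\log d)^{2}$ already‑placed sets, well within the reach supplied by the expander part of $G$ -- and that $\Delta(H)=O(d)$, splitting the few exceptional vertices into small trees, which only decreases $c(H)$.

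The core is a maximal‑family construction for $H$. Fix $p=0.715\ldots$, the maximiser in the definition of $\alpha$, and let $\ell_v$ be of order $\sqrt{\log\deg_H(v)\,/\,\log(1/(1-p))}$ for each $v\in V(H)$ (so $\ell_v=O(\sqrt{\log d}\,)$ throughout). Consider all partial models of $H$ in $G$: families of disjoint connected sets, one set $B_v$ of size $O(\ell_v)$ for each vertex $v$ of some subset $W\subseteq V(H)$, such that $B_u$ and $B_v$ are joined by a $G$‑edge whenever $uv\in E(H[W])$. Among these take one with $|W|$ maximum and, subject to that, $\sum_v|B_v|$ minimum; let $A$ be the set of vertices of $G$ outside all the $B_v$. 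If $W=V(H)$ we get $G\succ H$, a contradiction, so suppose $W\neq V(H)$ and pick $w\in V(H)\setminus W$. Maximality says that no connected $S\subseteq A$ of size $O(\ell_w)$ can serve as $B_w$: every such $S$ fails to be joined to $B_u$ for at least one neighbour $u$ of $w$ already in $W$, and the minimality of $\sum|B_v|$ excludes the local reroutings that would otherwise repair this. Now comes the point of the sizes: because $w$ has only $\deg_H(w)=O(d)$ neighbours, the relevant union bound -- over those placed neighbours -- has only $O(d)$ terms, and a random connected set $S$ of size about $\ell_w$ in the expander part of $A$ fails to meet $N(B_u)$, for $B_u$ of comparable size, with probability already beaten by this union bound once the local edge density the $B_u$ present to $A$ exceeds $p$. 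So being stuck forces those local densities to be at most $p+o_d(1)$, while the minimisation of $\sum|B_v|$ keeps $A$ from absorbing the density of $G$; feeding this into a global edge count -- one set $B_v$ of size $O(\ell_v)$ per vertex $v$, pairwise and internal density $\leq p+o_d(1)$ -- and summing over the $t$ branch sets, using the concavity of $x\mapsto x/\sqrt{\log(1/(1-x))}$ to handle the variation of the $\ell_v$, one is led to
$$
e(G)\;<\;\Bigl(\tfrac{p/2}{\sqrt{\log(1/(1-p))}}+o_d(1)\Bigr)\,t\sqrt{\log d}\;|G|\;=\;(\alpha+o_d(1))\,t\sqrt{\log d}\;|G|,
$$
contradicting $e(G)\geq c|G|$ once $d>D(\epsilon)$. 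The freedom in $p$ is what makes the leading constant exactly $\alpha$: the value $0.715\ldots$ optimises the trade‑off between the exponent $\ell^{2}\log(1/(1-p))$ needed to beat the union bound and the density $p$ that branch sets of that size cost.

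The genuinely hard part, I expect, is carrying out that middle step for an arbitrary $H$ rather than for a pseudo‑random or regular one. If $H$ were regular and $G$ were (pseudo)random, the partial model could be taken to be a balanced random partition of (the expander part of) $G$ into $t$ classes of size about $\ell$, and the estimate above is essentially tight. In general the $O(d)$ placed neighbours of $w$ can be distributed very unevenly among the $B_v$, the sets $B_v$ have different sizes, and some are cheap while others are dear; reconciling this with the sharp constant seems to demand both the preliminary regularisation of $H$ above and a genuinely amortised count in which the cheap branch sets subsidise the expensive ones -- which is where the minimisation of $\sum|B_v|$, and the concavity of the rate function, must do the real work, and where honestly matching the $o_d(1)$ error terms against the target constant $\alpha$ will consume most of the effort. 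A secondary difficulty is that, $G$ being only a minimal counterexample and not actually random, one must use its extracted expander structure to certify that the small sets grown inside $A$ really do behave like random sets of the claimed sizes. The remaining ingredients -- the reductions above, the handling of the low‑degree vertices of $H$, and the ball‑growing estimates -- should be routine given the existing clique‑minor machinery.
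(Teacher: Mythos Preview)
Your proposal is a research sketch, not a proof, and you essentially acknowledge this yourself (``the genuinely hard part, I expect, is\ldots'', ``should be routine'', ``must do the real work''). The central step---``a random connected set $S$ of size about $\ell_w$ in the expander part of $A$ fails to meet $N(B_u)$ with probability already beaten by this union bound once the local edge density the $B_u$ present to $A$ exceeds $p$''---is not justified. In a genuine random graph this is a tail estimate; in an arbitrary expander there is no canonical notion of a ``random connected set'', and no mechanism is given for producing such an $S$ or for certifying that its hitting probabilities behave as claimed. Likewise the ``global edge count'' paragraph does not explain how the local density cap $p+o_d(1)$ on and between the $B_v$, together with the minimisation of $\sum|B_v|$, yields $e(G)<(\alpha+o_d(1))t\sqrt{\log d}\,|G|$: the set $A$ may carry almost all the edges of $G$, and nothing you have written controls its density. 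These are not details to be filled in later; they are the whole argument.

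The paper proceeds quite differently and avoids both difficulties. It works in the class $\mathcal{E}_{m,k}$ with $m=(\alpha+\epsilon)|H|\sqrt{\log d}$ and $k=\epsilon m/2$, so that a minor-minimal $G$ has not only $\delta(G)>m$ but also $\kappa(G)>k$ and every edge in more than $m-1$ triangles. It then splits on whether $|G|\le C_2m$ (dense) or $|G|\ge C_2m$ (sparse). In the dense case the key new idea is to take a random equipartition of $G$ into slightly more than $|H|$ parts, discard the few bad ones, and then randomly assign the surviving parts to $V(H)$: because a typical part sees all but a $d^{-1-\epsilon/3}$ fraction of the others, the expected number of $H$-edges whose two parts fail to be adjacent is $o(|H|)$, and those are repaired by short paths through a pre-reserved ``connector''. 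This replaces the Myers--Thomason device of aiming for $H+K_{\beta t}$, which fails when $d$ is small. In the sparse case the triangle condition produces sixteen disjoint dense, highly connected subgraphs $G[T_0],\ldots,G[T_{15}]$; the new idea here is to equipartition $V(H)$ into six classes, find each of the $\binom{6}{2}=15$ induced bipartite pieces $H[V_i\cup V_j]$ as a \emph{rooted} minor inside one of $G[T_1],\ldots,G[T_{15}]$ via the dense-case theorem, and glue them together through $G[T_0]$. Neither case uses a maximal partial model or an amortised count over branch sets of varying sizes; the variable-$\ell_v$ machinery and the concavity argument you propose are not needed.
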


The proof follows very broadly the strategy of~\cite{thom01}, used also
in~\cite{myersthom}. It splits into two cases. The first is where $G$ has
density bounded away from zero, and is reasonably connected; this is
addressed in~\S\ref{secdense}. The second case is where $G$ is itself sparse
but still has dense vertex neighbourhoods as well as reasonable
connectivity; this is dealt with in~\S\ref{secsparse}. Nevertheless the
methods of~\cite{thom01,myersthom} are not adequate to handle the situation
where $|H|$ is much bigger than~$d$, and new ideas are needed. These are
described in the appropriate sections.

Our notation is more or less standard. For example, given a graph~$H$, then
(as used above) $|H|$ denotes the number of vertices, $e(H)$ the number of
edges, and $\delta(H)$ the minimum degree. If $X$ is a subset of the vertex
set $V(H)$ then $\Gamma(X)$ denotes the set of vertices not in $X$ that
have a neighbour in~$X$. The subgraph of $H$ induced by $X$ is denoted
by~$H[X]$.

The proof of Theorem~\ref{T:key} begins with the following
families of graphs.

\begin{dfn}
  Let $\enklet{}> 1$ and $ 0\leq k\leq \enklet{}/2$ be real numbers. Define
  \begin{align*} \enk{} = \{\,G\,:\, \abs{G}\geq \enklet{}\,,\,
    e(G)>\enklet{}\abs{G}-\enklet{}k\,\}\,.
  \end{align*}
\end{dfn}

The main usefulness of the class $\enk{}$ to the study of $c(H)$,
demonstrated by Mader~\cite{Mader68}. is that a minor-minimal element of
$\enk{}$ (that is, a graph $G\in\enk{}$ which has no proper minor in
$\enk{}$) enjoys the properties set out in the next lemma.

\begin{lemma}
  \label{L:enkprop}
  Let $G$ be a minor-minimal element of $\mathcal{E}_{\enklet{},k}$. Then
  $|G|\geq \enklet{}+1$, $e(G)\leq \enklet{}|G|-\enklet{}k+1$,
  $\enklet{}<\delta(G)<2\enklet{}$, $\kappa(G)>k$, and every edge of $G$ is
  in more than $\enklet{}-1$ triangles.
\end{lemma}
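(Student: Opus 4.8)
The plan is to exploit the defining property of a minor-minimal element: if $G$ is minor-minimal in $\enk$ then on the one hand $G\in\enk$, so $\abs G\geq\enklet$ and $e(G)>\enklet\abs G-\enklet k$, while on the other hand none of the three kinds of one-step minor of $G$ — delete a vertex, delete an edge, contract an edge — lies in $\enk$. Each assertion of the lemma will follow by applying this to a suitably chosen operation and reading off an inequality. Throughout I write $\epsilon=e(G)-\enklet\abs G+\enklet k>0$ for the ``excess''.

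First I would clear the bounds needing no connectivity. For $\abs G\geq\enklet+1$: combining $e(G)>\enklet\abs G-\enklet k\geq\enklet\abs G-\enklet^2/2$ with $e(G)\leq\binom{\abs{G}}{2}$ gives $\abs G^2-(2\enklet+1)\abs G+\enklet^2>0$; since $\abs G\geq\enklet$ already exceeds the smaller root it must exceed the larger one, namely $\enklet+\tfrac12(1+\sqrt{4\enklet+1})>\enklet+1$. For $e(G)\leq\enklet\abs G-\enklet k+1$: deleting any edge gives a proper minor on $\abs G\geq\enklet$ vertices with $e(G)-1$ edges, so $e(G)-1\leq\enklet\abs G-\enklet k$. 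For $\delta(G)>\enklet$: deleting a vertex $v$ of minimum degree gives a proper minor on $\abs G-1\geq\enklet$ vertices with $e(G)-\delta(G)$ edges, so $e(G)-\delta(G)\leq\enklet(\abs G-1)-\enklet k$, i.e.\ $\delta(G)\geq\epsilon+\enklet>\enklet$. For the triangle count: contracting an edge $uv$ gives a proper minor on $\abs G-1\geq\enklet$ vertices with $e(G)-1-t$ edges, where $t=\abs{\Gamma(u)\cap\Gamma(v)}$ is exactly the number of triangles on $uv$; not being in $\enk$, it satisfies $e(G)-1-t\leq\enklet(\abs G-1)-\enklet k$, so $t\geq\epsilon+\enklet-1>\enklet-1$. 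Finally $\delta(G)<2\enklet$: if every vertex had degree at least $2\enklet$ then $2e(G)\geq 2\enklet\abs G$, against the upper bound on $e(G)$ just obtained.

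The connectivity bound $\kappa(G)>k$ is the substantive part. Suppose for contradiction that $S$ is a vertex cut with $\abs S\leq k$, so $V(G)=A\cup S\cup B$ with $A,B$ nonempty and no $A$--$B$ edge. The induced subgraphs $G_1=G[A\cup S]$ and $G_2=G[B\cup S]$ are proper minors of $G$, with $\abs{G_1}+\abs{G_2}=\abs G+\abs S$ and $e(G_1)+e(G_2)=e(G)+e(G[S])$. Were both to satisfy $e(G_i)\leq\enklet\abs{G_i}-\enklet k$, adding these would give $e(G)+e(G[S])\leq\enklet(\abs G+\abs S)-2\enklet k$, hence $e(G[S])\leq\enklet(\abs S-k)-\epsilon<0$, which is absurd. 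So one of them, say $G_1$, has $e(G_1)>\enklet\abs{G_1}-\enklet k$; and if moreover $\abs{G_1}\geq\enklet$ then $G_1\in\enk$, contradicting minor-minimality.

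The remaining case — the dense side $G_1$ has fewer than $\enklet$ vertices — is where I expect the real difficulty, and it is essentially Mader's argument. Here one uses $\abs S\leq k\leq\enklet/2$: if $\abs{A\cup S}<\enklet$ then the edges meeting $A$ number at most $\binom{\abs{A}}{2}+\abs A\abs S\leq\enklet\abs A$, so $e(G-A)\geq e(G)-\enklet\abs A>\enklet\abs{G-A}-\enklet k$; that is, deleting such a small side preserves the density condition. Taking $S$ to be an inclusion-minimal cut and removing the components of $G-S$ one at a time, one tracks the excess: since $G-C\notin\enk$ whenever $\abs{G-C}\geq\enklet$, deleting a component $C$ must cost at least $\epsilon+\enklet\abs C$ edges, and summing this over the (at least two) components of $G-S$ forces more than $e(G)$ edges in all — a contradiction, once the degenerate configurations (those with some $\abs{G-C}<\enklet$, which force $\abs G<2\enklet$) are pinned down and handled separately. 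Apart from this bookkeeping for small components, every part of the lemma is a one-line consequence of minor-minimality applied to the right one-step minor.
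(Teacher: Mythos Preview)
Your treatment of $|G|\geq m+1$, the edge bound, the minimum-degree bounds, and the triangle count is correct and matches the paper's approach exactly.

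For the connectivity, however, you over-complicate matters and leave the argument incomplete. Your initial set-up is right: with a cut $S$ of size $\leq k$ and sides $G_1=G[A\cup S]$, $G_2=G[B\cup S]$, summing the edge bounds forces one side (say $G_1$) to satisfy $e(G_1)>m|G_1|-mk$, and you correctly note that the only obstacle to $G_1\in\enk$ is the possibility $|G_1|<m$. But this case cannot occur, and you already have the tool to see it: you proved $\delta(G)>m$ in the previous paragraph. Any vertex of $A$ has all its neighbours in $A\cup S$, so $|G_1|=|A\cup S|\geq 1+\delta(G)>m+1$; likewise $|G_2|>m+1$. The paper does exactly this. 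With both sides automatically large enough, both are proper minors outside $\enk$, and your edge-sum inequality gives $|S|>k$ directly.

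So the ``remaining case'' you worry about is vacuous, and the Mader-style component-removal procedure you sketch --- which you yourself flag as unfinished bookkeeping --- is unnecessary. The connectivity bound really is a two-line consequence once you feed $\delta(G)>m$ back in.
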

\begin{proof}
  The proof is standard and elementary (see for
  example~\cite[Section~2]{thom01}), though usually $\enklet{}$ and $k$ are
  taken to be integers, so we provide a brief sketch. There are no graphs
  $G\in\enk{}$ with $\enklet{}\leq |G| < \enklet{}+1$ because in this range
  ${|G|\choose 2} < \enklet{}|G|-\enklet{}k$. Hence the removal of a vertex
  of $G$, or the contraction or removal of an edge, violates the size
  condition, which yields all the claimed properties except
  $\kappa(G)>k$. To obtain this, consider a cutset $S$ and a component $W$
  of $G-S$. The condition $\delta(G)>\enklet{}$ implies that both the
  minors $G[W\cup S]$ and $G\setminus W$ have more than $\enklet{}$
  vertices, and hence
  \begin{align*}
  e(G) \leq e(G[W\cup S])+e(G\setminus W)\le
  \enklet(|W|+|S|)-\enklet{}k+\enklet{}(|G|-|W|)-\enklet{} k\end{align*}
  yielding $|S|>k$ as claimed.
\end{proof}

We now state the main theorems of Sections 2 and 3 respectively, and show
how they imply Theorem \ref{T:key} .

\begin{restatable}[]{theorem}{sectwothm}
\label{T:sectwoweak}
Given $\epsilon >0$, there exists $D_1(\epsilon)$, such that
if $H$ is a graph of average degree $d>D_1$, and $G$ is a graph of density at
least $p+\epsilon$, with the properties $\epsilon < p <1-\epsilon$,
$|G|\geq |H|\sqrt{\log_{1/(1-p)}d}$ and $\kappa(G)\geq \epsilon |G|$, then
$G\succ H$.
\end{restatable}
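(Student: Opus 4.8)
The goal is to exhibit a model of $H$ in $G$: pairwise disjoint connected sets $U_v \subseteq V(G)$, one for each $v \in V(H)$, with an edge of $G$ between $U_v$ and $U_w$ whenever $vw \in E(H)$. Write $t = |H|$ and $n = |G|$, so $n \geq t\sqrt{\log_{1/(1-p)} d}$. Two consequences of the hypotheses should be recorded first: since $p \leq 1-\epsilon$ we have $\log(1/(1-p)) \leq \log(1/\epsilon)$, so once $d > D_1(\epsilon,\delta)$ the factor $\sqrt{\log_{1/(1-p)} d}$ exceeds $1/\delta$ and hence $\kappa(G) \geq \delta n \geq t$; and any graph with $\kappa(G) \geq \delta n$ has diameter $O_\delta(1)$, which makes short rerouting always possible. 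Choose a random reserve $R \subseteq V(G)$ with $|R| = \gamma n$, where $\gamma = \gamma(\epsilon,\delta)$ is small; then, with room to spare, every vertex of $G$ has $\Omega_{\delta,\gamma}(n)$ neighbours in $R$, $G[R]$ is connected of diameter $O_\delta(1)$ (indeed $\kappa(G[R]) = \Omega(\gamma\delta n)$), and $G' := G - R$ still has $\kappa(G') \geq t$.

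The heart of the argument is to partition $V(G')$ into $t$ pieces $W_v$ ($v \in V(H)$), each of order $s \approx (1-\gamma)n/t \geq (1-\gamma)\sqrt{\log_{1/(1-p)} d}$, with two properties: (a) few of the $G'[W_v]$ are disconnected, so that splicing their components together uses only a small fraction of $R$; and (b) for all but $o(t)$ of the edges $vw$ of $H$ there is an edge of $G$ between $W_v$ and $W_w$. Granting such a partition, put $U_v = W_v$ initially; splice the components of each $G'[W_v]$ using $O_\delta(1)$-length paths through $R$; and for each of the $o(t)$ ``bad'' edges $vw$ of $H$, reroute an $O_\delta(1)$-length path from $N(W_v) \cap R$ to $N(W_w) \cap R$ inside the as-yet-unused part of $R$ (which remains highly connected) and attach it to $U_v$. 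Since every splice or reroute consumes $O_\delta(1)$ reserve vertices and $|R| = \gamma n = \gamma t\sqrt{\log_{1/(1-p)} d}$ dwarfs $t$, the reserve is never exhausted, the $U_v$ stay disjoint and connected, and they form a model of $H$, so $G \succ H$. For (b), the point is that if $W_v$ and $W_w$ were independent uniformly random $s$-subsets of a graph of density $p + \epsilon$ then the probability of no edge between them would be roughly $(1 - p - \epsilon)^{s^2}$, and since $s \geq (1-\gamma)\sqrt{\log_{1/(1-p)} d}$ with $\gamma$ small this is at most $d^{-1-\eta}$ for some $\eta = \eta(\epsilon) > 0$ — it is precisely here that the $\epsilon$ of slack in the density is spent — so that the expected number of violated edges is at most $e(H)\, d^{-1-\eta} = \tfrac{1}{2} t\, d^{-\eta} = o(t)$.

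The main obstacle, and the reason genuinely new ideas are needed, is producing a partition with properties (a) and (b) when $|H|$ is much larger than $d$ — the regime the classical methods of \cite{thom01, myersthom} cannot reach. There the pieces $W_v$ must have order barely $\sqrt{\log_{1/(1-p)} d}$, and a \emph{uniformly} random partition into pieces that small is useless: such pieces need not be pseudorandom relative to $G$, so (a) can fail (a random small piece may induce an almost edgeless subgraph) and the estimate $(1-p-\epsilon)^{s^2}$ in (b) need not hold — indeed if $G$ resembles a blow-up of a sparse graph, as the extremal graphs of \cite{NRTW} do, a small random piece can sit inside a single blob, forcing far more than $o(t)$ non-adjacent pairs, and the hypothesis $\kappa(G) \geq \delta|G|$ by itself does not prevent this. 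The real content of the section is therefore to use the dense, well-connected structure of $G$ to choose the pieces so that each one meets the neighbourhoods in $G$ in a balanced way — making each $G'[W_v]$ nearly connected and legitimising the $(1-p-\epsilon)^{s^2}$ heuristic — while keeping the total splicing-and-rerouting cost comfortably inside the reserve. I expect this balanced-partition step, rather than the surrounding probabilistic bookkeeping, to be where the difficulty lies.
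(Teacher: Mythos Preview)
Your outline matches the paper's strategy, and crucially you have the one genuinely new idea the paper needs for sparse $H$: rather than aiming for an $H+K_{\beta t}$ minor as in~\cite{thom01,myersthom} (which fails when $d\ll|H|$), randomly assign parts to $V(H)$ and bound the expected number of violated $H$-edges by $e(H)\cdot P \le \tfrac12 t\,d^{-\eta}=o(t)$, then repair those few via the reserve. That calculation is exactly what drives Theorem~\ref{C:almostcompatible}.

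Where you go astray is in locating the remaining work. You expect the hard step to be devising a ``balanced'' partition that simultaneously achieves (a) near-connected parts and (b) the $(1-p-\epsilon)^{s^2}$ defective-pair estimate, and you worry that blow-up-like structure in $G$ might defeat a naive random partition. In fact both issues are handled by existing machinery from~\cite{thom01}, and neither requires any pseudorandomness hypothesis on $G$ beyond the density:

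For (b), Lemma~\ref{T:generalpartition} (which is \cite[Theorem~3.1]{thom01} verbatim) delivers the bound for \emph{any} graph of the given density. The random partition there is stratified so that each part receives its fair share of high-degree and low-degree vertices, and this alone suffices to control the number of defective pairs; your blow-up concern does not materialise.

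For (a), the paper does not attempt it at all. Instead it sets aside a \emph{projector} $P$ (Theorem~\ref{T:connectorprojector}): a small random reserve in which every outside vertex has $\Omega(n)$ neighbours. Then for each part $V_v$, regardless of whether $G[V_v]$ is connected, Lemma~\ref{L:bipartiteprojection} supplies a set $M_v\subset P$ of size $O(\log|V_v|)=O(\log s)$ dominating $V_v$; joining the vertices of $M_v$ by short paths through a second reserve (the \emph{connector} $C$) makes $V_v\cup M_v\cup(\text{paths})$ connected. The total cost over all parts is $t\cdot O_\delta(\log s)$, well within the reserve since $\log s\ll s$. This device replaces your property~(a) entirely and is both simpler and more robust than trying to force small random pieces of $G$ to be connected.
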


\begin{restatable}[]{theorem}{secthreethm}
\label{T:secthree}
Given $\epsilon >0$, there exists $D_2(\epsilon)$, such that, if $H$ is a
graph of average degree $d>D_2$, $\enklet{}$ is a number with
$\enklet{}\geq \epsilon|H|\sqrt{\log d}$, and $G$ is a graph satisfying
$|G|\geq D_2\enklet{}$, $\kappa(G)\geq D_2|H|$, $e(G)\leq \enklet{}|G|$ and
every edge of $G$ lies in more than $\enklet{}-1$ triangles, then
$G\succ H$.
\end{restatable}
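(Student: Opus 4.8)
The plan is to show that inside $G$ one can find, for every vertex $v\in V(H)$, a connected \emph{branch set} $U_v$ so that adjacent vertices of $H$ have joined branch sets, and the whole construction fits inside $V(G)$. The natural template is the iterative ``grow and reserve'' argument of~\cite{thom01,myersthom}: process the vertices of $H$ in some order, and for each $v$ build $U_v$ as a small connected subtree of $G$ together with a large reservoir of as-yet-unused vertices that are adjacent to $U_v$; the reservoir must be big enough that, when a later neighbour $w$ of $v$ is processed, one can find within $w$'s reservoir a vertex adjacent to $U_v$ and splice it in. Since $e(G)\le\enklet{}|G|$, the average degree of $G$ is at most $2\enklet{}$, but the crucial structural inputs are that $G$ is highly connected ($\kappa(G)\ge 100|H|$, so cutting out anything of size $O(|H|)$ leaves $G$ connected) and, decisively, that \emph{every edge lies in more than $\enklet{}-1$ triangles}, so every edge $uv$ has at least $\enklet{}-1$ common neighbours. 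This last property is what replaces global density: it guarantees that once $U_v$ contains even a single edge, it already has a dense neighbourhood to draw reservoirs from, so the sparse case behaves locally like the dense case.

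The key steps, in order, are as follows. First, fix $\enklet{}\ge 0.3|H|\sqrt{\log d}$ and set a reservoir size parameter $r$ of order $\enklet{}/|H|=\Theta(\sqrt{\log d})$; the target is that each branch set and its reservoir together use only $O(r)$ vertices of $G$, so the total is $O(r|H|)=O(\enklet{})$, which is why we need only $|G|\ge C_2\enklet{}$. Second, build each initial $U_v$ as a single edge of $G$ that has not yet been used and whose endpoints avoid all previously reserved vertices; this is possible because the high connectivity and minimum-degree bound (every edge in triangles forces $\delta(G)>\enklet{}$, via Lemma~\ref{L:enkprop}-type reasoning, hence many edges survive after deleting the $O(\enklet{})$ reserved vertices). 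Third — and this is the heart — for each such edge $U_v=u_1u_2$, use the ``$>\enklet{}-1$ triangles'' hypothesis to extract a reservoir $R_v$ of $\Theta(r)$ common neighbours of $u_1,u_2$ disjoint from everything reserved so far; each vertex of $R_v$ is adjacent to $U_v$. Fourth, when processing $v$ and needing to connect $U_v$ to an already-built $U_w$ (for $vw\in E(H)$), pull a fresh vertex $x$ from $R_w$, check it has a neighbour in $R_v$ (or directly in $U_v$) — a counting/greedy step using that $|R_v|$ is large compared with the number of neighbours of $v$ in $H$ processed so far, which is at most $d$-ish but we only ever need finitely many per step — add $x$ (and if necessary one more common-neighbour vertex) to $U_w$, keeping it connected, and remove the used vertices from the reservoirs. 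The arithmetic to make all of this consistent is where the constant $0.3$ and the relation $\enklet{}\asymp|H|\sqrt{\log d}$ enter: a reservoir initially of size $\sim r$ must survive being depleted by up to $\deg_H(v)$ connection operations while still leaving room, and balancing the logarithm of the ``survival probability'' of a random reservoir vertex against the degree $d$ yields exactly the $\sqrt{\log d}$ scaling; choosing the density parameter $p$ to optimise this balance produces the Bollob\'as–Catlin–Erd\H os constant $\alpha$, though in this theorem we are handed $\enklet{}$ already large enough that we only need the bound to \emph{close}, not to be optimal.

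The main obstacle is the fourth step done \emph{globally}: a naive vertex-by-vertex greedy embedding would need each reservoir to have size comparable to $\Delta(H)$, which can be as large as $|H|$ and would blow the vertex budget. The fix — and I expect this is where ``new ideas are needed'' as the authors warn — is to not insist that $U_v$ connect to all its $H$-neighbours through $R_v$ directly; instead one should process $H$ in an order that bounds the number of \emph{back-neighbours} (e.g. a degeneracy-type ordering of $H$, or handling $H$ in layers according to degree, exploiting that $H$ has average degree $d$ so most vertices have degree $O(d)$ and the high-degree vertices are few and can be embedded first with larger reservoirs). One must also handle the components/structure of $H$ carefully: if $H$ is disconnected or has isolated low-degree parts, the branch-set construction is easier there and the budget is dominated by the dense core. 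Finally, one needs to verify that after all deletions the connectivity $\kappa(G)\ge 100|H|$ is never actually consumed — we only ever delete $O(\enklet{})$ vertices total, but $\enklet{}$ can far exceed $|H|$, so the connectivity is used not to keep $G$ connected after deletions (it is) but in a more local ``linkage'' form: to route the small number of required connecting paths simultaneously and disjointly, which is where Menger/linkage with parameter $\gg|H|$ is the right tool. Assembling these — layered processing of $H$, triangle-neighbourhoods for reservoirs, and linkage for the at most $|H|$ required joins — gives the embedding and hence $G\succ H$.
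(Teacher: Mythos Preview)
Your proposal takes a fundamentally different route from the paper and, as written, has a genuine gap.

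The paper does \emph{not} build branch sets greedily vertex-by-vertex. Instead it reduces the sparse case to the dense case already handled in Theorem~\ref{T:sectwostrong}. Concretely: using the triangle hypothesis (and Lemma~\ref{T:largebipminor} as a safety net), it locates sixteen disjoint vertex sets $S_0,\dots,S_{15}$, each the neighbourhood of some low-degree vertex, so that each $G[S_i]$ is dense ($\delta>5\enklet{}/6-1$, $|S_i|\le 6\enklet{}$); it then trims each to a highly connected subset $T_i$. Next it takes an arbitrary equipartition of $V(H)$ into six parts $V_1,\dots,V_6$ and writes $H$ as the union of the fifteen induced subgraphs $H^{\{i,j\}}=H[V_i\cup V_j]$. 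Each $H^{\{i,j\}}$ is embedded as a \emph{rooted} minor inside the dense graph $G[T^{\{i,j\}}]$ by a direct application of Theorem~\ref{T:sectwostrong}. Finally, the roots in the various $T^{\{i,j\}}$ labelled by the same vertex $v\in V(H)$ are joined together via paths routed through the hub $T_0$; there are only $5|H|$ such roots, so $\kappa(G)\ge 100|H|$ and Menger suffice. Thus all $e(H)$ adjacencies are realised \emph{inside} the dense pieces by the random-partition machinery of \S\ref{secdense}, and the global connectivity is only used for $O(|H|)$ identifications.

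The gap in your approach is Step~4. You assert that a vertex $x\in R_w$ can be found with a neighbour in $R_v$ (or in $U_v$), calling this ``a counting/greedy step using that $|R_v|$ is large''. But the size of $R_v$ is irrelevant unless there are actually edges between $R_w$ and $R_v\cup U_v$, and nothing in the hypotheses provides this: $G$ is globally sparse (density of order $\enklet{}/|G|\le 1/C_2$), and the triangle condition is purely local --- it tells you that the two endpoints of a \emph{single} edge have many common neighbours, not that two small sets built from \emph{different} edges see each other at all. Your reservoirs $R_v$ and $R_w$ could sit in entirely different parts of $G$ with no edges between them. You cannot repair this with Menger/linkage either: $H$ has $d|H|/2$ edges, but $\kappa(G)\ge 100|H|$ supplies only $O(|H|)$ disjoint paths, so at most $O(|H|)$ of the required $vw$-joins can be done that way --- far short of the $d|H|/2$ needed. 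The paper's decomposition of $H$ into fifteen pieces, each embedded wholesale in a dense patch, is precisely what converts the $e(H)$ adjacency requirements into only $O(|H|)$ routing requirements, and this is the idea your outline is missing.
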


\begin{proof}[Proof of Theorem \ref{T:key}]
  We may assume that $0<\epsilon <1/4$. Let $H$ be a graph of average
  degree~$d$, let $\enklet{} = (\alpha+\epsilon)|H|\sqrt{\log d}$ and let
  $k = \epsilon \enklet{}/2$. We need to show that $G\succ H$ provided
  $e(G)\ge \enklet{}|G|$ and $D(\epsilon)$ is large. Now $G\in\enk$, and
  (replacing $G$ by a minor of itself if necessary) from now on we assume
  that $G$ is minor minimal in~$\enk$ (note that we thereby forego the
  inequality $e(G)\ge \enklet{}|G|$ but we still have \\$e(G)>
  \enklet{}|G|-\enklet{}k$). Thus $G$
  has all the properties stated in Lemma~\ref{L:enkprop}. We may assume
  that $D(\epsilon)$
  is large enough that $\epsilon(\alpha+\epsilon)\sqrt{\log d}/2 > D_2$, where
  $D_2=D_2(\epsilon)$ is the constant of Theorem~\ref{T:secthree}, and
  therefore $\kappa(G)> k> D_2|H|$. Thus if $\abs{G}\geq
  D_2\enklet{}$, then $G$ satisfies the conditions of
  Theorem~\ref{T:secthree} (provided $D(\epsilon)>D_2$) and $G\succ
  H$. 
  So suppose instead that $\abs{G}<
  D_2\enklet{}$. Then $e(G)>\enklet{}|G|-\enklet{}k \geq
  \enklet{}|G|(1-\epsilon/2)$, so $G$ has density at least
  $2\enklet{}(1-\epsilon/2)/(|G|-1)>1/D_2$.  Let $\epsilon'=\epsilon/2D_2$,
  and let $p+\epsilon'$ be the density of~$G$, so $\epsilon' < p <
  1-\epsilon'$. Observe that $p>1/2D_2$ so $\epsilon p>\epsilon'$. Therefore
  $p(1+\epsilon)> p+\epsilon' >2\enklet{}(1-\epsilon/2)/(|G|-1)$, so we have
  $|G|> (2\enklet{}/p)(1-\epsilon/2)/(1+\epsilon) >(2\alpha/p)|H|\sqrt{\log
    d}$.
  Now $2\alpha/p \geq
  1/\sqrt{\log(1/(1-p))}$ holds by the definition of~$\alpha$, so $|G|\geq
  |H|\sqrt{\log_{1/(1-p)}d}$. As for the connectivity of~$G$, we have
  $\kappa(G)\geq k=\epsilon \enklet{}/2 > \epsilon
  |G|/2D_2=\epsilon'|G|$. We
  may now apply Theorem~\ref{T:sectwoweak} to $G$ with $\epsilon'$ in place
  of~$\epsilon$, to see that $G\succ H$, provided $D(\epsilon)>D_1(\epsilon')$.
\end{proof}

\section{The Dense Case}\label{secdense}
In this section, our main aim will be to prove
Theorem~\ref{T:sectwoweak}. In fact, it will turn out to be useful to prove
a slightly stronger version of the theorem, namely
Theorem~\ref{T:sectwostrong}, in which $H$ is a {\em rooted} minor, which
is to say we specify, for each $v\in V(H)$, a vertex of $G$ that must lie
in the class~$U_v$. This will be needed when we come to the sparse case in
the next section.

The essence of the proof is to choose the parts $U_v$ at random
from~$G$. It is very unlikely that the parts so chosen will be connected;
to get round this, we first put aside a few vertices of $G$ and choose the
$U_v$ from the remainder, using the put aside vertices afterwards to
augment the sets $U_v$ into connected subgraphs. In this way, all that we
require of the random sets $U_v$ is that there is an edge in $G$ between
$U_v$ and $U_w$ whenever $vw\in E(H)$. This procedure nearly works, but it
throws up a few ``bad'' parts $U_v$ that cannot be used, and even among the
good parts there will be a few edges $vw\in E(H)$ for which there is no
$U_v$--$U_w$ edge. In~\cite{thom01}, where $H=K_t$, this was not a big
problem: the initial aim is changed to finding instead a $K_{(1+\beta)t}$
minor, at no real extra cost if $\beta$ is small, and the few blemishes in
this minor still leave us with a $K_t$ minor. In~\cite{myersthom}, where
$H$ has average degree at least $t^\epsilon$, a similar solution is found;
a $H+K_{\beta t}$ minor is aimed for, which even with up to $\beta t$
blemishes still leaves an $H$ minor. The method works in~\cite{myersthom}
because if $\beta(\epsilon)$ is small then $\gamma(H)$ and
$\gamma(H+K_{\beta t})$ are relatively close, as are $c(H)$ and
$c(H+K_{\beta t})$.

This method fails completely for sparse~$H$, because $c(H)$ and
$c(H+K_{\beta t})$ are far from each other, so we need a new approach. We
randomly partition $G$ (after setting aside some vertices) into somewhat
more than $|H|$ parts but without predetermining which part is assigned to
which vertex of $H$. After discarding the few bad parts we still have $|H|$
good parts left, each of which has an edge to most of the other good
parts. The good parts are now randomly assigned to the vertices of~$H$; it
turns out that this is enough to ensure that not too many (fewer than
$|H|$) edges $vw\in E(H)$ are left with no $U_v$--$U_w$ edge. For
these few missing edges, we can find $U_v$--$U_w$ paths at the final stage
when we make all the $U_v$ connected. In this way we obtain the required
$H$ minor.

\subsection{Almost-$H$-compatible equipartitions}\label{subalmost}

\begin{dfn}
  An \textit{equipartition} of $G$ is a partition of $V(G)$ into parts
  $V_i$ whose sizes differ by at most one. A
  $j$\textit{-almost-}$H$\textit{-compatible equipartition} of $G$ is an
  equipartition into parts $V_v$, $v\in V(H)$, where there are at most $j$
  edges $vw$ of $H$ for which there is no edge between $V_v$ and $V_w$. An
  $H$\textit{-compatible equipartition} of $G$ is a 0-almost-$H$-compatible
  equipartition.
\end{dfn}

We now give the details of the argument sketched above.  Here,
in~\S\ref{subalmost}, we find an almost-$H$-compatible equipartition in a
dense graph, and then in~\S\ref{subconnproj} we show how to connect up the
parts of the equipartition, as well as `adding in' the missing edges.

The result of taking a random equipartition is described by the next
lemma. We remark that, in the proof, the parts are not chosen entirely
randomly, but subject to the constraint that each part gets its fair share
of high and low degree vertices --- this helps to control the number of
``bad'' parts. The lemma is more or less identical
to~\cite[Theorem~3.1]{thom01}, and we keep its technical form so that we
can copy it over with very little comment.

\begin{lemma}
\label{T:generalpartition}
Let $G$ be a graph of density at least $p$, let $l\geq 2$ be an integer,
and let $s = \lfloor |G|/l\rfloor\geq 2$. Let $\omega > 1$ and
$0<\eta < p$ . Then $G$ has an equipartition into at least
$s-\frac{2s}{\omega \eta}$ parts, with at most
$2s^2(6\omega)^l(\frac{1-p}{1-\eta})^{(1-\eta)l(l-1)}$ pairs of parts having
no edge between them.
\end{lemma}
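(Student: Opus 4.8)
The plan is to obtain the equipartition by a randomised construction, following the scheme of \cite[Theorem~3.1]{thom01}. First I would order the vertices of $G$ by degree, say $v_1,\dots,v_{|G|}$ with $d(v_1)\geq\cdots\geq d(v_{|G|})$, and split this list into $s=\floor{|G|/l}$ consecutive blocks $B_1,\dots,B_s$ of $l$ vertices each (discarding the at most $l-1$ leftover vertices into an ignored part, which costs only a bounded additive error). I would then pick a uniformly random bijection within each block, equivalently choose uniformly at random a permutation $\sig_i$ of each $B_i$, and form candidate parts $W_1,\dots,W_l$ by taking the $j$-th part to consist of the $\sig_i$-image of the $j$-th slot of $B_i$, over all $i$. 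Thus each $W_j$ contains exactly one vertex from each block; this ``stratified'' sampling is what guarantees each part gets its fair share of high- and low-degree vertices, which in turn lets us lower bound the density of $G$ restricted to the union of any two parts.

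The heart of the argument is the second moment / union bound estimate for the number of bad pairs. Fix two candidate parts $W_a, W_b$; say this pair is \emph{bad} if there is no edge of $G$ between them. The vertices of $W_a\cup W_b$ are a random transversal of pairs of slots, one pair per block, and conditioning on the vertex sets but not yet on any further structure, a crude bound shows that the density of $G[W_a\cup W_b]$ is at least $p-\eta$ up to the sampling fluctuation; more precisely, the number of non-edges within $W_a\cup W_b$ behaves, in expectation over the choice of the two slots per block, like a sum of indicator variables, and one estimates
\[
\Pr[\,W_a,W_b\text{ bad}\,]\ \leq\ \Big(\tfrac{1-p}{1-\eta}\Big)^{(1-\eta)l(l-1)}(6\omega)^{l}
\]
for all but an $O(1/(\omega\eta))$ fraction of the $\binom{l}{2}$ pairs $(a,b)$; the $(6\omega)^l$ factor absorbs the rare blocks whose degree sequence is far from typical, and the $\frac{2s}{\omega\eta}$ parts that may have to be deleted are exactly those parts which participate in too many bad pairs. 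Taking expectations, the expected number of bad pairs among the surviving $\geq s-\frac{2s}{\omega\eta}$ parts is at most $2s^2(6\omega)^l\big(\tfrac{1-p}{1-\eta}\big)^{(1-\eta)l(l-1)}$, so some outcome of the random bijections achieves this bound, which is what we want. (Note the $s^2$ rather than $l^2$: after the randomisation we are really producing $s$ parts of size $l$, not $l$ parts of size $s$ --- I should be careful with this bookkeeping, as the roles of $s$ and $l$ are easy to transpose.)

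The main obstacle I anticipate is making the density estimate inside a random pair of parts precise enough to extract the exponent $(1-\eta)l(l-1)$ rather than something weaker like $(1-\eta)l^2/2$ or $l^2$. The point is that a pair of parts spans $\binom{2l}{2}$ vertex-pairs, of which $l(l-1)$ cross between the two parts; we only care about these crossing pairs, and we need that a $(1-\eta)$-fraction of them are ``good positions'' in a sense that survives the block-wise degree stratification. The trick, as in \cite{thom01}, is to discard a small set of blocks (those contributing disproportionately many low-degree vertices), and within the remaining blocks to bound the probability that a given crossing pair is a non-edge by a quantity close to $1-p$ rather than to the local density; summing logarithms over the $l(l-1)$ crossing pairs and exponentiating gives the stated bound, with $\eta$ quantifying how much density we sacrifice to make the discarding argument go through and $\omega$ controlling how aggressively we discard. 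Once that estimate is in hand the rest is routine: a union bound over the $\binom{l}{2}$ pairs of parts and an averaging argument to fix a good choice of the random bijections.
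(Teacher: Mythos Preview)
Your approach is essentially the paper's: both simply invoke the stratified random partition argument of \cite[Theorem~3.1]{thom01}, and your sketch of that argument is broadly accurate. Two small points to tidy up. First, as you yourself flag, you have the roles of $s$ and $l$ transposed: the degree-ordered list should be split into $l$ blocks of size~$s$, and the candidate parts are the $s$ transversals $W_1,\dots,W_s$, each of size~$l$ (this is why the lemma promises at least $s-2s/\omega\eta$ parts and why the defect bound carries the factor~$s^2$). Second, after discarding the at most $2s/\omega\eta$ unacceptable parts and the $|G|-sl$ initially removed vertices, what you have is not yet an equipartition of all of~$G$; the paper's one additional step beyond \cite{thom01} is to redistribute all discarded vertices among the surviving parts, which can only add edges between parts and therefore does not increase the number of defective pairs. (A minor aside: in \cite{thom01} the ``unacceptable'' parts are not those participating in many bad pairs, but those whose vertex degree statistics are atypical; the bound on defective pairs is proved only among the acceptable parts.)
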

\begin{proof}
  The argument is essentially exactly that of~\cite[Theorem~3.1]{thom01},
  though the conclusion stated there is very slightly different.  The
  argument involves removing $|G|-sl$ vertices, and then choosing randomly,
  from a certain distribution, an equipartition of the remaining vertices
  into $s$ parts of size~$l$. Some of these parts are of no use (they are
  called ``unacceptable'' in~\cite{thom01} and ``bad'' above). Amongst the
  other parts, some pairs will be defective in that they fail to have an
  edge between them. At the start of the final paragraph of the proof
  in~\cite{thom01}, it is stated that there is a partition with at most
  $2s/\omega\eta$ unacceptable parts and at most
  $2s^2(6\omega)^l(\frac{1-p}{1-\eta})^{(1-\eta)l(l-1)}$ defective pairs
  amongst the other parts. For the conclusion of
  \cite[Theorem~3.1]{thom01}, the unacceptable parts and one part from each
  defective pair are thrown away. For the conclusion of the present lemma,
  we keep all the acceptable parts. We then take the vertices from the
  unacceptable parts, together with the $|G|-sl$ vertices initially
  removed, and redistribute them amongst the acceptable parts so as to
  obtain the desired equipartition.
\end{proof}

\begin{lemma}\label{L:logprop}
  Let $\epsilon\leq1/2$ and let $0<x<1-\epsilon$. Then
  $\sqrt{\frac{\log(x+\epsilon)}{\log x}} \leq 1-\epsilon$.
\end{lemma}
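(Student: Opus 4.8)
The plan is to remove the square root, then the logarithms, in stages, ending at an elementary one‑variable inequality.

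First I would reduce to a cleaner statement. We may assume $0<\epsilon\leq\tfrac12$ (the case $\epsilon=0$ being trivial), and then $0<x<x+\epsilon<1$, so $\log x<\log(x+\epsilon)<0$, and in particular $0<\frac{\log(x+\epsilon)}{\log x}<1$. Squaring the claimed inequality and multiplying through by $\log x<0$ (which reverses it) shows it is equivalent to $\log(x+\epsilon)\geq(1-\epsilon)^2\log x$, that is, to
\[
x+\epsilon\ \geq\ x^{(1-\epsilon)^2}\qquad(0<x<1-\epsilon).
\]

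Next I would locate the worst case of this clean inequality. Set $g(x)=\log(x+\epsilon)-(1-\epsilon)^2\log x$. Then $g'(x)=\frac{1}{x+\epsilon}-\frac{(1-\epsilon)^2}{x}$ is negative on $(0,x_0)$ and positive on $(x_0,\infty)$, where $x_0=\frac{\epsilon(1-\epsilon)^2}{1-(1-\epsilon)^2}=\frac{(1-\epsilon)^2}{2-\epsilon}$, so $g$ attains its minimum over $(0,\infty)$ at $x_0$ and it is enough to check $g(x_0)\geq0$. The convenient point is that $x_0+\epsilon=\frac{(1-\epsilon)^2+\epsilon(2-\epsilon)}{2-\epsilon}=\frac{1}{2-\epsilon}$, so a short simplification turns $g(x_0)\geq0$ into
\[
\epsilon(2-\epsilon)\log(2-\epsilon)\ \leq\ 2(1-\epsilon)^2\log\tfrac{1}{1-\epsilon},
\]
which is all that remains.

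Proving this last inequality for $0<\epsilon\leq\tfrac12$ is the step I expect to be the real obstacle, as there seems to be no slick argument. I would substitute $u=1-\epsilon\in[\tfrac12,1)$, so that $\epsilon(2-\epsilon)=1-u^2$ and the inequality becomes $(1-u^2)\log(1+u)\leq 2u^2\log(1/u)$, and then bound each logarithm by a polynomial pointing the right way: for $u\leq1$ the alternating Taylor series gives $\log(1+u)<u-\tfrac{u^2}{2}+\tfrac{u^3}{3}$, while $\log(1/u)=\sum_{n\geq1}\frac{(1-u)^n}{n}>(1-u)+\tfrac{(1-u)^2}{2}$. Substituting these in, $2u^2\log(1/u)-(1-u^2)\log(1+u)$ is bounded below by a polynomial that factors as $\tfrac16\,u(1-u)(2u-1)(6-3u-u^2)$, and every factor is nonnegative on $[\tfrac12,1)$ (in particular $6-3u-u^2\geq2$ there), so this lower bound is nonnegative and the inequality follows. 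The points to watch are that the two polynomial estimates go in the right direction and are sharp enough — the cubic truncation of $\log(1+u)$ is precisely what is needed — and it is the clean factorisation of the resulting error polynomial that keeps the argument short.
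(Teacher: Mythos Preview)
Your argument is correct. The reduction to $x+\epsilon\ge x^{(1-\epsilon)^2}$ is valid, the unique critical point $x_0=(1-\epsilon)^2/(2-\epsilon)$ does lie in $(0,1-\epsilon)$ and gives the global minimum of $g$, and the substitution $u=1-\epsilon$ together with the two Taylor truncations yields exactly the polynomial $\tfrac16 u(1-u)(2u-1)(6-3u-u^2)$, which is nonnegative on $[\tfrac12,1)$. (You are right that the cubic truncation of $\log(1+u)$ is essential: using only $\log(1+u)<u$ produces the lower bound $-u(1-u)^3$, which is negative.)

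By contrast, the paper does not prove the lemma from scratch at all: it simply cites \cite[Lemma~3.1]{myersthom} and observes that the extra hypothesis $\epsilon\le x$ stated there is never used in that proof, so the conclusion holds on the full range $0<x<1-\epsilon$. Your approach therefore trades a one-line citation for a fully self-contained argument; the cost is a page of calculus, but the benefit is that a reader need not chase the reference, and your proof makes transparent exactly where the constraint $\epsilon\le\tfrac12$ enters (through the factor $2u-1$).
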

\begin{proof}
This is \cite[Lemma~3.1]{myersthom}, except that there the condition is
$0<\epsilon\leq x\leq 1-\epsilon$. However, though the implied condition
$\epsilon \leq1/2$ is used in the proof, the condition $\epsilon\leq x$ is
not, and the proof works for $0<x$.
\end{proof}

Here is the main result of this subsection.

\begin{theorem}
\label{C:almostcompatible}
Given $\epsilon >0$, there exists $D_3(\epsilon)$, such that if $H$ is a
graph of average degree $d>D_3$, and $G$ is a graph of density at least
$p+\epsilon$, with the properties $\epsilon < p <1-\epsilon$ and
$|G|\geq |H|\sqrt{\log_{1/(1-p)}d}$, then $G$ contains an
$|H|d^{-\epsilon/3}$-almost-$H$-compatible equipartition.
\end{theorem}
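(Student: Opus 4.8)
The plan is to derive the result from the random-equipartition lemma, Lemma~\ref{T:generalpartition}. First I would use it to produce an equipartition of $G$ into somewhat more than $|H|$ parts with relatively few \emph{defective pairs} (pairs of parts with no edge between them); then discard the surplus parts, redistributing their vertices, to reach an equipartition into exactly $|H|$ parts; and finally assign these parts to the vertices of $H$ by a uniformly random bijection, bounding by a first-moment argument the number of edges $vw$ of $H$ for which there is no edge between the parts assigned to $v$ and $w$. The reason for not deciding in advance which part plays the role of which vertex of $H$ is that the handful of defective pairs is then unlikely to fall on edges of $H$.

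Concretely, write $t=|H|$, $n=|G|$ and $q=p+\epsilon$, and fix small constants $\xi,\eta>0$ and a large constant $\omega$, all depending only on $\epsilon$, with $\eta<p$ and $\omega\eta$ large enough that $(1+\xi/3)(1-\tfrac2{\omega\eta})\geq1$. Set $l=\lceil(1-\xi)n/t\rceil$ and $s=\lfloor n/l\rfloor$. Since $n/t\geq\sqrt{\log_{1/(1-p)}d}$ and $d>D_3(\epsilon)$ is large, one checks that $l\geq2$ and $t(1+\xi/3)\leq s\leq t/(1-\xi)$, so Lemma~\ref{T:generalpartition}, applied with density $q$ in place of $p$, yields an equipartition of $G$ into $M\geq s(1-\tfrac2{\omega\eta})\geq t$ parts with at most
\[ B \;:=\; 2s^2(6\omega)^l\Bigl(\tfrac{1-q}{1-\eta}\Bigr)^{(1-\eta)l(l-1)} \]
defective pairs. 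Deleting all but $t$ of the parts and redistributing the vertices of the deleted parts among the survivors gives an equipartition of $G$ into exactly $t$ parts, still with at most $B$ defective pairs, since enlarging parts cannot destroy an edge between two of them. Now take a uniformly random bijection $\sigma$ from these $t$ parts to $V(H)$; for each edge $vw$ of $H$ the probability that $\sigma$ assigns to $v$ and $w$ a defective pair of parts is at most $2B/(t(t-1))$, so the expected number of edges of $H$ with no edge between the corresponding parts is at most $e(H)\cdot 2B/(t(t-1))=dB/(t-1)$. Hence some $\sigma$ produces an almost-$H$-compatible equipartition with at most $dB/(t-1)$ bad edges, and it remains only to check that $dB/(t-1)\leq t d^{-\epsilon/3}$.

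This last inequality, or rather the bound on $B$ it rests on, is where the real work lies and is the step I expect to be the main obstacle. It suffices to show $B\leq t^2 d^{-1-\epsilon/2}$, for then $dB/(t-1)\leq 2td^{-\epsilon/2}\leq td^{-\epsilon/3}$ when $d$ is large. The factor $s^2\leq t^2/(1-\xi)^2$ in $B$ is harmless, and $(6\omega)^l$ is of lower order: since $l\to\infty$ it can be absorbed by conceding a small constant fraction of the exponent of $\bigl(\tfrac{1-q}{1-\eta}\bigr)^{(1-\eta)l(l-1)}$. So everything reduces to showing that this last quantity is at most $d^{-1-\epsilon}$. For this I would combine the inequality $l(l-1)\geq(1-\xi)^3\log_{1/(1-p)}d=(1-\xi)^3\log d/(-\log(1-p))$ (valid for $d$ large, using $n/t\geq\sqrt{\log_{1/(1-p)}d}$) with Lemma~\ref{L:logprop} applied with $x=1-p-\epsilon$, so that $x+\epsilon=1-p$ and hence $-\log(1-q)\geq(-\log(1-p))/(1-\epsilon)^2$; taking $\xi,\eta$ small enough in terms of $\epsilon$ then gives $(1-\eta)l(l-1)\log\tfrac{1-\eta}{1-q}\geq(1+\epsilon)\log d$, which is precisely what is needed. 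The delicate point throughout is to choose $l,\xi,\eta,\omega$ so that \emph{simultaneously} there are enough parts (so $s(1-\tfrac2{\omega\eta})\geq t$), not too many (so $s=O(t)$), and $B$ is small enough; and it is Lemma~\ref{L:logprop} that converts the slack between the densities $p$ and $p+\epsilon$ into the slack between the exponents $1$ and $1+\epsilon$ that makes the final count go through.
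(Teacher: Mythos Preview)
Your proposal is correct and follows essentially the same route as the paper: apply Lemma~\ref{T:generalpartition} to get an equipartition into somewhat more than $|H|$ parts with few defective pairs, randomly assign parts to vertices of~$H$, bound the expected number of bad edges by a first-moment calculation, and use Lemma~\ref{L:logprop} to turn the density gap~$\epsilon$ into the exponent gap needed to beat the factor~$d$ coming from $e(H)$. The only notable difference is your choice of~$l$: you take $l=\lceil(1-\xi)n/t\rceil$, whereas the paper fixes $l=\lceil(1-\epsilon/4)\sqrt{\log_{1/(1-p)}d}\rceil$ independently of~$n$; both choices give $s\approx t$ and both make the defective-pair count small enough, so this is cosmetic.
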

\begin{proof} 
  We shall apply Lemma~\ref{T:generalpartition} to $G$, replacing $p$ in
  the lemma by $p+\epsilon$ (as we may), and using suitably chosen
  parameters $l$, $k$, $\eta$, $\omega$ so that $|H|\leq s(1-2/\omega\eta)$
  and $2/\omega\eta\leq 1/10$.  Suppose we have done this, obtaining an
  equipartition into $k\geq s(1-2/\omega\eta)$ parts. Note that
  $\epsilon<1/2$, and that we can make $l$, $|H|$ and $s$ as large as we
  like by making $D_3(\epsilon)$ large.  In particular
  $2s^2/{k\choose2}\leq 5$. Thus, writing $P$ for the probability that a
  randomly chosen pair of parts has no edge between, we have
  $P\le 5(6\omega)^l(\frac{1-p-\epsilon}{1-\eta})^{(1-\eta)l(l-1)}$. Now
  randomly label $|H|$ of the parts as $U_v$, $v\in V(H)$. The expected
  number of edges $vw$ of $H$ without a $U_v$--$U_w$ edge is
  $Pe(H)=P|H|d/2$.  We shall choose the parameters so that
  $P\le d^{-1-\epsilon/3}$, and therefore there is some labelling with
  fewer than $|H|d^{-\epsilon/3}$ such edges. Redistributing the vertices
  of the non-labelled parts amongst the $|H|$ labelled parts then gives the
  desired almost-$H$-compatible equipartition, so proving the theorem.

  All that remains is to choose suitable parameters. To start with, let
  \\$l=\ceil{(1-\epsilon/4)
    \sqrt{\log_{1/(1-p)}d}}$ and $s=\floor{|G|/l}$.
  We take $\omega=20/\epsilon^2\eta$, with $\eta$ still to be
  chosen. Certainly $2/\omega\eta\leq 1/10$, as needed, and moreover
  $s(1-2/\omega\eta)\geq |H|$. 

  We turn now to the bound on $P$, which is
  $P\le 5(6\omega)^l(\frac{1-p-\epsilon}{1-\eta})^{(1-\eta)l(l-1)}$.
  Choose $\eta$ small so that $\eta<\epsilon/4$ and
  $(1-2\epsilon)^{\epsilon/2}<(1-\eta)^{1-2\eta}$. Since $p>\epsilon$ this
  implies 
  $$ \left(\frac{1-p-\epsilon}{1-\eta}\right)^{(1-2\eta)}<
  (1-p-\epsilon)^{1-\epsilon}
  \frac{(1-2\epsilon)^{\epsilon-2\eta}}{(1-\eta)^{(1-2\eta)}}
  < (1-p-\epsilon)^{1-\epsilon}\,.$$
  Make $l$ large enough so that
  $(1-2\eta)l^2<(1-\eta)l(l-1)$. Then \\$P\leq 5(6\omega)^l
  (1-p-\epsilon)^{(1-\epsilon)l^2}$.
  By Lemma~\ref{L:logprop} we have
  $\frac{\log(1-p)}{\log(1-p-\epsilon)} \leq (1-\epsilon)^2$, meaning that
  $(1-p-\epsilon)^{(1-\epsilon)^2}\leq 1-p$, so
  $P\leq5(6\omega)^l(1-p)^{l^2/(1-\epsilon)}$. Since\\
  $l=\ceil{(1-\epsilon/4)
    \sqrt{\log_{1/(1-p)}d}}$ and $(1-\epsilon/4)^2/(1-\epsilon) > 1+\epsilon/2$, we obtain\\
  $P\leq5(6\omega)^l d^{-1-\epsilon/2}$. Finally, by making $d$ large, we
  have $P\leq d^{-1-\epsilon/3}$, as desired.
\end{proof}

\subsection{The connector and the projector}\label{subconnproj}

As mentioned at the start of~\S\ref{secdense}, when proving
Theorem~\ref{T:sectwostrong} we first put aside a small set for later
use. This set is actually made up of two special sets that we call the {\em
  connector} and the {\em projector}. The connector will contain many short
paths between all pairs of vertices, and the projector will allow us to
connect many sets $X$ by connecting only about $\log\abs{X}$
vertices of the projector.

We borrow a couple of very straightforward lemmas from~\cite{thom01}.

\begin{lemma}[\protect{\cite[Lemma~4.2]{thom01}}]
\label{L:connectivity}
Let $G$ be a graph of connectivity $\kappa>0$ and let $u,v\in V(G)$. Then
$u$ and $v$ are joined in $G$ by at least $\kappa^2/4|G|$ internally
disjoint paths of length at most $2|G|/\kappa$.
\end{lemma}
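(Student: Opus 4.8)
The plan is a two-step argument: first produce a large family of internally disjoint $u$--$v$ paths via Menger's theorem, then argue that most of them must be short because their interiors are disjoint and hence cannot collectively occupy more than $|G|$ vertices.

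\emph{Step 1 (many internally disjoint paths).} Since $G$ is $\kappa$-connected, every $u$--$v$ vertex cut has size at least $\kappa$, so by Menger's theorem there exist $\kappa$ pairwise internally disjoint $u$--$v$ paths $P_1,\dots,P_\kappa$. (If $uv\in E(G)$ one takes the edge itself as one path and applies Menger in $G-uv$, which has connectivity at least $\kappa-1$, to get the remaining $\kappa-1$; the edge has empty interior so disjointness is preserved.) We may of course assume these paths are induced, in particular have no repeated vertices.

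\emph{Step 2 (most of them are short).} The interiors of $P_1,\dots,P_\kappa$ are pairwise disjoint subsets of $V(G)\setminus\{u,v\}$, so their sizes sum to at most $|G|-2$. A path of length $\ell$ contributes $\ell-1$ interior vertices, so if more than $\kappa/2$ of the $P_i$ had length exceeding $2|G|/\kappa$, these would together use more than $(\kappa/2)\cdot(2|G|/\kappa)=|G|$ interior vertices, a contradiction. Hence at least $\kappa/2$ of the paths have length at most $2|G|/\kappa$ (a little care with floors and the possible "$+1$'' slack recovers exactly the stated bound, and the weaker quantity $\kappa^2/4|G|$ is then safely below $\kappa/2$ since $\kappa\le|G|-1<2|G|$). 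This yields the claimed $\kappa^2/4|G|$ internally disjoint $u$--$v$ paths of length at most $2|G|/\kappa$.

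\emph{Main obstacle.} There is no deep obstacle here; the lemma is genuinely elementary and its only delicate points are bookkeeping: handling the case $uv\in E(G)$ when invoking Menger (easily dealt with as above), and tracking the additive constant in the path-length bound so that "$2|G|/\kappa$'' rather than, say, "$2|G|/\kappa+1$'' appears. The deliberately lossy final bound $\kappa^2/4|G|$ (as opposed to $\kappa/2$) absorbs exactly this slack and is the form convenient for later use, so no sharpness is needed.
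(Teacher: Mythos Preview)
Your approach is correct and is the standard elementary argument. The paper itself does not supply a proof of this lemma; it merely quotes it from~\cite{thom01}, where the proof is exactly Menger plus the averaging/counting step you describe. So there is nothing to compare against in the present paper, and your Step~1/Step~2 outline matches the original.

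One small caution on the bookkeeping in Step~2: the line ``these would together use more than $(\kappa/2)\cdot(2|G|/\kappa)=|G|$ interior vertices'' implicitly treats a long path as having at least $2|G|/\kappa$ interior vertices, whereas length $>2|G|/\kappa$ only gives interior $>2|G|/\kappa-1$. With purely real-valued estimates this gap is not always absorbed by passing from $\kappa/2$ down to $\kappa^2/4|G|$ (the inequality $4|G|(|G|-\kappa)\ge\kappa(2|G|-\kappa)$ fails once $\kappa>(3-\sqrt5)|G|$). The fix, as you hint, is to use that the interior count is an integer: a long path has interior at least $\lfloor 2|G|/\kappa\rfloor\ge 2$, and for $\kappa$ in the troublesome range near $|G|$ this floor equals~$2$ exactly, which is enough to push the count through. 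So your parenthetical ``a little care with floors'' is doing real work, not just cosmetic rounding; once that is made explicit the argument is complete.
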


\begin{lemma}[\protect{\cite[Lemma~4.1]{thom01}}]
\label{L:bipartiteprojection}
Let $G$ be a bipartite graph on vertex classes $A,B$ with the property
every vertex in $A$ has at least $\gamma \abs{B}$ neighbours in $B$, where
$\gamma>0$. Then there is a set $M\subseteq B$ with
$|M|\leq\floor{\log_{1/(1-\gamma)}\abs{A}}+1$ such that every vertex in $A$
has a neighbour in $M$, that is, $A\subset\Gamma(M)$.
\end{lemma}

The next theorem provides us with a connector and a projector. The form of
the theorem allows us to put aside  not just these two sets but also a third
set $R$ which will form the roots of our rooted $H$ minor.

\begin{theorem} \label{T:connectorprojector} Given $\eta>0$, there exists
  $D_4(\eta)$ such that, if $G$ is a graph with $|G|\geq D_4$ and
  $\kappa(G)\geq8\eta|G|$, then for each set $R\subset V(G)$ with
  $|R|\leq\eta |G|$, there exist subsets $C$ (the connector) and $P$ (the
  projector) in $V(G)$, disjoint from each other and from $R$, with the
  following properties:
  \begin{enumerate}
  \item{}$\abs{C},\abs{P}\leq 2\eta |G|$,
  \item{} each pair of vertices $u,v \notin C$ is joined by
    at least $2\eta^{2+1/2\eta}|G|$ internally disjoint
    paths of length at most $1/2\eta$ whose internal vertices lie in~$C$,
  \item{} each vertex not in $P$ has at least $2\eta^2|G|$ neighbours
    in $P$, and
  \item{} for every $Y\subset P$ with $|Y|\leq\eta^2|G|$ and 
    for every $X\subset V(G)-P-C$, there is a set $M$ in $P-Y$ with
    $X\subset \Gamma(M)$ and
    $|M|\leq\floor{\log_{1/(1-\eta/2)}\abs{X}}+1$.
  \end{enumerate}
\end{theorem}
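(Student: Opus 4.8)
The plan is to build $C$ and $P$ as disjoint random subsets of $V(G)-R$, each of a carefully chosen size, and then verify the four properties one at a time, treating $C$ and $P$ essentially independently. Concretely, I would first set aside $R$, leaving a graph $G' = G-R$ with at least $(1-\eta)|G|$ vertices and connectivity still at least $\kappa(G)-|R| \geq (\delta-\eta)|G| \geq \tfrac{7}{8}\delta|G|$, so $G'$ remains well-connected on a comparable vertex set. Now choose $C \subset V(G')$ by including each vertex independently with probability $\eta$ (or take a uniformly random subset of size $\lceil \eta|G|\rceil$), and similarly choose $P$ from $V(G')-C$ with inclusion probability about $\eta$; a union bound over the (few) bad events will show a good choice exists once $D_4$ is large. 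Property~(1) is then immediate from a Chernoff bound on $|C|$ and $|P|$.

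For property~(3), the projector condition, fix a vertex $x \notin P$. Since $\kappa(G') \geq \tfrac{7}{8}\delta|G|$, each such $x$ has at least $\tfrac{7}{8}\delta|G|$ neighbours in $G'$, so the expected number landing in $P$ is at least $\eta \cdot \tfrac{7}{8}\delta|G| > \tfrac{3}{4}\eta\delta|G|$; a Chernoff bound gives at least $\tfrac{1}{4}\eta\delta|G|$ neighbours in $P$ with probability $1 - e^{-\Omega(\eta\delta|G|)}$, and a union bound over at most $|G|$ vertices $x$ closes this. Property~(4) then follows from property~(3) together with Lemma~\ref{L:bipartiteprojection}: given $Y \subset P$ with $|Y| \leq \eta\delta|G|/8$, every $x \in X$ still has at least $\eta\delta|G|/4 - \eta\delta|G|/8 = \eta\delta|G|/8 \geq (\delta/16)|P|$ neighbours in $P-Y$ (using $|P| \leq 2\eta|G|$ from~(1)), so Lemma~\ref{L:bipartiteprojection} applied to the bipartite graph between $X$ and $P-Y$ with $\gamma = \delta/16$ yields the set $M$ of size at most $\lfloor\log_{1-\delta/16}|X|\rfloor+1$. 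Note this needs property~(1) to be established simultaneously, which is fine since they are all part of one union bound.

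The main obstacle is property~(2), the connector condition, which is the analogue of Lemma~\ref{L:connectivity} but with the internal vertices of the paths confined to the random set $C$. I would apply Lemma~\ref{L:connectivity} to $G'$ itself (or to $G$): any $u,v \notin C$ are joined in $G'$ by at least $(\tfrac{7}{8}\delta|G|)^2 / 4|G| \geq \tfrac{1}{8}\delta^2|G|$ internally disjoint paths of length at most $2|G'|/\kappa(G') \leq 4/\delta$. The issue is that these paths use vertices outside $C$. The fix is to note that there are many such paths, they are internally disjoint, and each has at most $4/\delta - 1$ internal vertices; so if $C$ is chosen randomly, each path survives (has all its internal vertices in $C$) with probability at least $\eta^{4/\delta}$, and by internal disjointness these survival events are negatively correlated (or one can use a second-moment / Azuma argument, since the path-survival count is a function of the independent vertex-inclusion choices with bounded differences). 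Hence the expected number of surviving $u$--$v$ paths through $C$ is at least $\tfrac{1}{8}\delta^2\eta^{4/\delta}|G|$, and concentration plus a union bound over all $\binom{|G|}{2}$ pairs $(u,v)$ gives at least $\tfrac{1}{32}\delta^2\eta^{4/\delta}|G|$ surviving paths for every pair, as required (the constant $32$ versus $8$ absorbing the concentration loss). The one subtlety to check carefully is that a vertex $w \notin C$ may be an endpoint $u$ or $v$ of a path but must not be an internal vertex of the paths we keep for some \emph{other} pair; since we only ever require the internal vertices to lie in $C$ and $u,v \notin C$ is given, the paths we exhibit for the pair $(u,v)$ have all internal vertices in $C$ and are internally disjoint, so no conflict arises. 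Making $D_4(\delta,\eta)$ large enough that $\eta\delta|G|$ and $\delta^2\eta^{4/\delta}|G|$ both dominate the relevant $\log|G|$ terms completes the argument.
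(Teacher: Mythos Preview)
Your proposal is correct and takes essentially the same route as the paper: pick $C$ and then $P$ as independent random $\eta$-subsets of $V(G)-R$, get~(1) by Markov/Chernoff, get~(2) by observing that the internally disjoint paths supplied by Lemma~\ref{L:connectivity} survive into $C$ independently (not merely with negative correlation, so a direct Chernoff bound suffices), get~(3) by Chernoff on neighbour counts, and deduce~(4) deterministically from~(1),~(3) and Lemma~\ref{L:bipartiteprojection}.

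One small repair is needed in your handling of~(2): the vertices $u,v$ are only required to lie outside~$C$, so they may belong to~$R$, in which case they are not in $G'=G-R$ and you cannot apply Lemma~\ref{L:connectivity} to~$G'$. The fix (which is what the paper does) is to apply Lemma~\ref{L:connectivity} to $G-(R\setminus\{u,v\})$ for each pair; this graph still has connectivity at least $(\delta-\eta)|G|$, and the resulting $u$--$v$ paths have all their internal vertices outside~$R$, hence in the set from which $C$ is sampled. Similarly, in~(3) remember to subtract $|C|\le 2\eta|G|$ when counting neighbours available to land in~$P$; the constants you wrote still work after this adjustment.
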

\begin{proof}
  First, we construct $C$. We put vertices of $G-R$ inside $C$
  independently at random with probability $\eta$. By Markov's Inequality,
  $\abs{C}\leq 2\eta |G|$ holds with probability at least $1/2$.

  Define $\delta$ by $\kappa(G)=\delta|G|$, so $\delta\geq 8\eta$.  Let
  $u,v\notin C$. Applying Lemma~\ref{L:connectivity} to the graph
  $G-(R\setminus\{u,v\})$, noting that its connectivity is at least
  $(\delta-\eta)|G|$, we see that $u$ and $v$ are joined by at least
  $(\delta-\eta)^2|G|/4\geq \delta^2|G|/16$ vertex-disjoint paths of length
  at most $2/(\delta-\eta)\leq 4/\delta$, whose internal vertices are not
  in~$R$.  The probability that a given one of these paths has all internal
  vertices inside $C$ is at least $\eta^{4/\delta}$. We therefore expect at
  least $\delta^2\eta^{4/\delta}|G|/16$ vertex disjoint paths of length at
  most $4/\delta$ with all internal vertices inside~$C$. The paths are
  disjoint so the probabilities for different paths are independent of each
  other. Hence by a standard Chernoff bound (see for example
  \cite{AlonSpencer}), except with probability at most
  $|G|^2\exp(-\delta^2\eta^{4/\delta}|G|/128)$, all pairs of vertices are
  joined by at least $\delta^2\eta^{4/\delta}|G|/32$ vertex disjoint paths
  of length at most $4/\delta$ whose internal vertices lie inside~$C$. By
  making $D_4$ large we can ensure this probability is less than $1/2$.
  Combining this with the observation in the previous paragraph, we see
  that with positive probability there is a set $C$ satisfying
  $\abs{C}\leq 2\eta |G|$ and such that every pair $u,v \notin C$ is joined
  by at least $\delta^2\eta^{4/\delta}|G|/32\geq 2\eta^{2+1/2\eta}|G|$
  paths inside $C$ of length at most $4/\delta\leq 1/2\eta$. 
  Fix now such a choice of $C$; then $C$ satisfies properties~(1) and~(2).
	
  We next construct $P$. Consider $G-C-R$. Since $G$ has minimum degree at
  least $\kappa(G)=\delta |G|$, every vertex has at least
  $\delta|G|-4\eta|G|\geq\delta|G|/2$ neighbours inside $G-C-R$. Place
  vertices in $P$ independently at random with probability $\eta$. As
  before, with probability at least $1/2$, $\abs{P}\leq 2\eta|G|$.
  Given a vertex not in $P$, the number of its neighbours in $P$ is
  binomially distributed with mean at least $\eta\delta|G|/2$. Again, by a
  Chernoff bound, the probability that any vertex has fewer than
  $\eta\delta|G|/4$ neighbours in $P$ is at most
  $|G|\exp(-\eta\delta|G|/16)$, which is less than $1/2$ if $D_4$ is large. 
  Therefore with positive probability there is a set $P$ with $|P|\le
  2\eta|G|$ such that every vertex not in $P$ has at least
  $\eta\delta|G|/4\ge 2\eta^2|G|$ neighbours in $P$. Make such a choice of
  $P$: it satisfies properties~(1) and~(3).

  Finally, consider the bipartite graph with $A=X$ and
  $B=P-Y$. By choice of~$P$, each vertex of $A$ has at least
  $\eta\delta|G|/4-|Y|\geq \eta\delta|G|/8$ neighbours in $B$, which is at
  least $\delta|B|/16$ by property~(1). Lemma~\ref{L:bipartiteprojection}
  then implies there is a set $M$ in $P-Y$ with $X\subset \Gamma(M)$ and
  $|M|\leq\floor{\log_{1/(1-\delta/16)}\abs{X}}+1
  \leq\floor{\log_{1/(1-\eta/2)}\abs{X}}+1$. Thus property~(4) holds.
\end{proof}

The next theorem shows that the connector and projector do the job required
of them in the discussion at the start of~\S\ref{secdense}. The theorem
accommodates general partitions of a set; in this paper we shall apply it
only to equipartitions, but the more general result will be useful
elsewhere (see~\S\ref{secfuture}).

\begin{theorem}
\label{T:connectpartition}
Given $\eta>0$ there is a constant $D_5(\eta)$ with the following
property. Let $G$ be a graph and $R\subset V(G)$ satisfy the conditions of
Theorem~\ref{T:connectorprojector}.  Let $C$ and $P$ be sets given by that
theorem. Suppose further that $|G|\geq D_5|R|$. Then for any partition
$(V_r: r\in R)$ of $G-C-P-R$ into $|R|$ parts, together with a set
$F\subset\{\,rs\,:\,r,s\in R\}$ of pairs from $R$ with $|F|\le |R|/\eta$,
there are disjoint sets $U_r$, $r\in R$ such that
\begin{enumerate}
\item{}$V_r\cup \{r\}\subset U_r$ for all $r\in R$, 
\item{}$G[U_r]$ is connected for all $r\in R$, and
\item{} there is a $U_r$-$U_s$ edge for every pair $rs\in F$.
\end{enumerate}
\end{theorem}
\begin{proof}
  Note that the conditions of Theorem~\ref{T:connectorprojector} imply
  $\eta\leq1/8$. Our first aim is to choose, one by one for each $r\in R$,
  disjoint sets $M_r\subset P$ with $V_r\cup\{r\}\subset \Gamma(M_r)$:
  for later parts of the proof, we shall require that
  $\sum_{r\in R}|M_r|\le \eta^{(3+1/2\eta)}|G|$. To this end, 
  let $C_\eta$ be a constant, depending on~$\eta$, such that
  $1+\log_{1/(1-\eta/2)}(x+1)\le C_\eta\log(2x)$ holds for all $x\ge 1$.
  The sets $M_r$ will be derived via Theorem~\ref{T:connectorprojector} and
  satisfy $|M_r|\le \floor{\log_{1/(1-\eta/2)}\abs{V_r\cup\{r\}}}+1$; note
  that therefore $|M_r|\le C_\eta\log2|V_r|$. Thus $\sum_{r\in R}|M_r|\leq
  C_\eta\sum_{r\in R}\log2|V_r| \leq C_\eta|R|\log (2|G|/|R|)$ holds
  by the concavity of the $\log$ function. Now $\log(2x)/x\to0$ as
  $x\to\infty$ so $D_5$ can be chosen so that
  $ C_\eta (|R|/|G|)\log (2|G|/|R|) \leq \eta^{(3+1/2\eta)}$, which
  means that
  $\sum_{r\in R}|M_r|\le \eta^{(3+1/2\eta)}|G|$. We now see that our first
  aim can indeed be realised: choose the $M_r$ one by one, each time
  applying property~(4) of Theorem~\ref{T:connectorprojector} with
  $X=V_r\cup\{r\}$ and $Y$ being the union of those $M_{r'}$
  already chosen, noting that  $|Y| \leq \sum_{r\in R}|M_r|\leq
  \eta^{(3+1/2\eta)}|G|\leq \eta^2|G|$.

  We now choose disjoint sets $P_r\subset P$, $r\in R$, such that
  $G[P_r\cup M_r]$ is connected, and therefore so also is
  $G[P_r\cup M_r \cup V_r\cup\{r\}]$. To do this, we find $|M_r|-1$ paths
  joining the vertices of $M_r$, whose internal vertices are in~$C$, making
  use of property~(2) of Theorem~\ref{T:connectorprojector}. We choose the
  paths one by one. When we come to join a pair $u$, $v$ of vertices, some
  of the $2\eta^{2+1/2\eta}|G|$ paths given by property~(3) will be
  unavailable, because they contain a vertex lying in some previously
  chosen path. But at most $\sum_{r\in R}|M_r|\le \eta^{(3+1/2\eta)}|G|$
  paths were previously chosen, so at most
  $(1/2\eta) \eta^{(3+1/2\eta)}|G| < \eta^{2+1/2\eta}|G|$ $u$--$v$ paths
  are unavailable. Hence the desired sets $P_r$ can all be found.

  We now take $U_r=P_r\cup M_r \cup V_r\cup\{r\}$. This gives
  properties~(1) and~(2) of the theorem. To obtain property~(3), for each
  $rs\in F$ we find an $r$--$s$ path $Q_{rs}$ whose internal vertices lie
  in~$C$. These can be found one by one by an argument similar to that just
  given; the total number of unavailable $r$--$s$ paths, accounting for the
  sets $P_r$ as in the previous paragraph and for the paths $Q_{r's'}$
  already chosen, is at most
  $\eta^{2+1/2\eta}|G| + (2/\eta)(|R|/\eta) < 2\eta^{2+1/2\eta}|G|$ if
  $D_5$ is large, so
  at least one path is available.  Having found $Q_{rs}$, add the vertices
  of $Q_{rs}$, apart from~$r$, to $U_s$. In this way we arrive at sets
  satisfying properties~(1)--(3).
\end{proof}

\begin{dfn} Let $H$ and $G$ be graphs, and let $R\subset V(G)$  be a set of
  $|H|$ vertices labelled by the vertices of $H$; say $R=\{\,r_v \,:
  \, v\in V(H)\}$. We say that $G$ has an $H$ minor {\em
    rooted at} $R$ if there exist non-empty disjoint subsets
  $U_v: v\in V(H)$ of $V(G)$ with $r_v\in U_v$, such that each $G[U_v]$ is
  connected and, whenever $vw$ is an edge in $H$, there is an edge in $G$
  between $U_v$ and $U_w$.
\end{dfn}

We are finally ready to state and prove the main theorem of this
section. This is a strengthening of Theorem~\ref{T:sectwoweak} that
gives a rooted minor, which, as we mentioned earlier, will be useful later
on. Note that Theorem~\ref{T:sectwoweak} follows from
Theorem~\ref{T:sectwostrong} by picking an arbitrary set $R$ of roots.

\begin{theorem}
\label{T:sectwostrong}
Given $\epsilon >0$, there exists $D_1(\epsilon)$, such that
if $H$ is a graph of average degree $d>D_1$, $G$ is a graph of density at
least $p+\epsilon$, with the properties $\epsilon < p <1-\epsilon$,
$|G|\geq |H|\sqrt{\log_{1/(1-p)}d}$ and $\kappa(G)\geq \epsilon |G|$, and
furthermore $R\subset V(G)$ is a set of vertices labelled by
$V(H)$, then $G$ has an $H$ minor rooted at~$R$.
\end{theorem}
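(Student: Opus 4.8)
The plan is to assemble Theorem~\ref{T:sectwostrong} directly from the three ingredients already prepared in this section: the almost-$H$-compatible equipartition (Theorem~\ref{C:almostcompatible}), the connector--projector construction (Theorem~\ref{T:connectorprojector}), and the connecting-up lemma (Lemma~\ref{T:connectpartition}). First I would set $\eta=\delta/8$ and apply Theorem~\ref{T:connectorprojector} to $G$ with the given root set $R$ (here $|R|=|H|$, which is at most $\eta|G|$ since $|G|\ge|H|\sqrt{\log_{1/(1-p)}d}$ and $d$ is large) to extract the connector $C$ and projector $P$, each of size at most $2\eta|G|=\delta|G|/4$. What remains, $G-C-P-R$, still has at least $(1-\delta)|G|$ vertices and, crucially, density at least $p+\epsilon-o(1)$ (removing a $\delta$-fraction of vertices changes the density by a bounded factor; one should be slightly careful here and perhaps pass to $p'$ slightly smaller than $p$, adjusting $\epsilon$ accordingly, so that the hypotheses of Theorem~\ref{C:almostcompatible} still apply to $G-C-P-R$ — its order is at least $(1-\delta)|G|\ge|H|\sqrt{\log_{1/(1-p')}d}$ for $d$ large).

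Next I would apply Theorem~\ref{C:almostcompatible} to $G-C-P-R$ to obtain an $|H|d^{-\epsilon'/3}$-almost-$H$-compatible equipartition into parts $V_v$, $v\in V(H)$; relabel these parts by the roots via the labelling $R=\{r_v\}$, so the part indexed by $v$ is $V_{r_v}$. Let $F$ be the set of ``missing'' edges $vw\in E(H)$ for which there is no $V_v$--$V_w$ edge; by construction $|F|\le |H|d^{-\epsilon'/3}\le|H|=|R|$ once $d$ is large. Now the situation is exactly the hypothesis of Lemma~\ref{T:connectpartition}: we have $G$, $R$, $C$, $P$ as produced by Theorem~\ref{T:connectorprojector} with $\eta=\delta/8$, an equipartition $V_r$ of $G-C-P-R$ into $|R|$ parts, and a set $F$ of pairs from $R$ with $|F|\le|R|$; the remaining requirement $|G|\ge D_5|R|=D_5|H|$ holds because $|G|\ge|H|\sqrt{\log_{1/(1-p)}d}$ and $d>D_1$ is large. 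Lemma~\ref{T:connectpartition} then hands us disjoint sets $U_r$, $r\in R$, with $V_r\cup\{r\}\subset U_r$, each $G[U_r]$ connected, and a $U_r$--$U_s$ edge for every $rs\in F$.

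Finally I would check that $U_v:=U_{r_v}$ gives the desired rooted $H$ minor. The sets are non-empty and disjoint, $r_v\in U_v$, and each $G[U_v]$ is connected. For an edge $vw\in E(H)$: if $vw\notin F$ then by definition of $F$ there is already a $V_v$--$V_w$ edge, hence a $U_v$--$U_w$ edge; if $vw\in F$ then Lemma~\ref{T:connectpartition}(3) supplies a $U_{r_v}$--$U_{r_w}$ edge. So every edge of $H$ is realised, and $G$ has an $H$ minor rooted at $R$. The only real work is bookkeeping with the constants: one must choose $D_1(\epsilon,\delta)$ large enough to dominate $D_3, D_4, D_5$ and to absorb the density loss from deleting $C\cup P\cup R$, and must verify that after that deletion the order and density hypotheses of Theorem~\ref{C:almostcompatible} are still met (with a slightly smaller $\epsilon$). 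I expect this constant-chasing, and in particular arguing that a bounded proportion of deleted vertices leaves the density comfortably above the threshold $p$ needed so that $(1-\delta)|G|\ge|H|\sqrt{\log_{1/(1-p)}d}$ still holds, to be the only delicate point; everything else is a direct concatenation of the earlier results.
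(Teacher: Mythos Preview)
Your proposal is correct and follows exactly the paper's approach: apply Theorem~\ref{T:connectorprojector} with $\eta=\delta/8$, then Theorem~\ref{C:almostcompatible} to $G-C-P-R$, then Lemma~\ref{T:connectpartition} with $F$ the set of missing edges. The one correction concerns the delicate point you flagged: to recover the order hypothesis of Theorem~\ref{C:almostcompatible} after deletion you must take $p'$ slightly \emph{larger} than $p$, not smaller (the paper sets $p'=p+\delta/2$, $\epsilon'=\epsilon-3\delta/2$, after first reducing $\delta$ so $\delta\le\epsilon/2$), because increasing $p$ \emph{decreases} $\sqrt{\log_{1/(1-p)}d}$; Lemma~\ref{L:logprop} then yields $\sqrt{\log_{1/(1-p')}d}\le(1-\delta/2)\sqrt{\log_{1/(1-p)}d}$, which exactly offsets $|G-C-P-R|\ge(1-\delta/2)|G|$.
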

\begin{proof}
  Note that we can assume throughout that both $|G|$ and $|G|/|H|=|G|/|R|$
  are arbitrarily large, since $\log(1/(1-p))$ is bounded above by the
  constraint on~$p$. We begin by applying
  Theorem~\ref{T:connectorprojector} with $\eta = \epsilon/20$ to obtain
  sets $C$ and $P$ as in the theorem. Consider the graph $G'=G-C-P-R$. We
  want to apply Theorem~\ref{C:almostcompatible} to~$G'$, but $|G'|$ might
  not satisfy the stated lower bound. However, we may assume that
  $|G'|\geq (1-4\eta)|G|-|R|\geq(1-\epsilon/4)|G|$, so
  $e(G')\geq e(G)-(\epsilon/4)|G|(|G|-1)$. Thus
  $e(G')\geq (p+\epsilon/2){|G|\choose2}$, and $G'$ has density at least
  $p+\epsilon/2$. Let $p'=p+\epsilon/4$ and let $\epsilon'=\epsilon/4$. By
  Lemma~\ref{L:logprop},
  $\sqrt{\log_{1/(1-p')}d}\leq(1-\epsilon/4)\sqrt{\log_{1/(1-p)}d}$. So we
  may apply Theorem~\ref{C:almostcompatible} to $G'$ with parameters $p'$
  and $\epsilon'$ to obtain an $|H|d^{-\epsilon/12}$-almost-$H$-compatible
  equipartition $V_v$, $v\in V(H)$ of $G'=G-C-P-R$.  Apply
  Theorem~\ref{T:connectpartition} to this equipartition (formally writing
  $V_{r_v}$ instead of $V_v$), with $F$ the set of pairs $r_vr_w$ for which
  there is no $V_v$--$V_w$ edge; note that
  $|F|\leq |H|d^{-\epsilon/12}\leq|H|<|R|/\eta$ if $d$ is large. The
  resulting sets $U_r$ then form an $H$ minor in $G$ rooted at $R$.
\end{proof}

\section{The Sparse Case}\label{secsparse}
Our approach in the sparse case broadly mirrors that of~\cite{thom01},
except that we need to construct an $H$ minor rather than a complete minor,
and the graphs $G$ in which we are working have many fewer edges. (Both
these difficulties were sidestepped in~\cite{myersthom}, where there were
enough edges in the sparse case to find a large complete minor.)  We will
construct a large number (depending on~$\epsilon$) of disjoint dense and highly
connected subgraphs of a minor-minimal graph --- in all but one of these we
find different subgraphs of $H$ as rooted minors, using
Theorem~\ref{T:sectwostrong}, and then we connect these minors together
using the remaining dense subgraph.

To find the dense subgraphs, we use the fact that, in a minor minimal
graph, a typical vertex has a dense neighbourhood. Either we can find a
large number of typical vertices whose neighbourhoods are largely disjoint,
in which case we can carry out the programme just described, or most
vertices have highly overlapping neighbourhoods. The latter case will be
handled by the next lemma. This lemma is much the same as
\cite[Lemma~5.1]{thom01}, but we need to reprove it because the degrees in
our graphs are much lower and the constants involved are different.

\begin{lemma}
  \label{T:largebipminor}
  Given $\epsilon, C>0$, there exists $D_6(\epsilon, C)$ such that, if
  $d>D_6$, the following holds.  Let $H$ be a graph of average degree~$d$,
  and let $n\geq \epsilon |H|\sqrt{\log d}$. Let $G$ be a bipartite graph
  with vertex classes $A$, $B$ such that $\abs{B}\leq Cn$,
  $\abs{A}\geq D_6n$, and every vertex of $A$ has at least $n$ neighbours
  in~$B$. Then $G\succ H$.
\end{lemma}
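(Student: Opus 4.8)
The plan is to build an $H$-minor of $G$ directly and probabilistically, exploiting that $|A|$ is a very large multiple of $|B|$ (after cleaning, $|A|\ge D_6m\ge D_6|B|/600$ with $D_6$ a large constant), so that every branch set can carry a generous reserve of vertices from~$A$. I would first clean $G$ up: discarding all but $2D_6m$ vertices of~$A$ does not shrink any neighbourhood $N(a)\subseteq B$, so we may take $|A|\leq 2D_6m$; then repeatedly delete vertices of~$A$ with fewer than $m/12$ surviving neighbours in~$B$ and vertices of~$B$ with fewer than $|A|/C_0$ surviving neighbours in~$A$. A short edge count (as in Lemma~\ref{L:enkprop}) shows this halts with non-empty sets, and after passing to the largest component we may assume that $G$ is connected, $|A|\geq D_6m/C_0$, $m/12\leq|B|\leq 100m$, every vertex of~$A$ has at least $m/12$ neighbours in~$B$, and every vertex of~$B$ has at least $|A|/C_0$ neighbours in~$A$, for an absolute constant~$C_0$. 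Writing $n=|H|$, the hypotheses force $n>d$, so $n$ and $\sqrt{\log d}$ may be taken arbitrarily large.

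Next I would set aside a reserve $A_0\subset A$ with $|A_0|=\lfloor|A|/2\rfloor$, put $A_1=A\setminus A_0$, and take two independent uniformly random equipartitions: one of almost all of~$B$ into parts $B_v$, $v\in V(H)$, of size $s=\lfloor|B|/n\rfloor$, and one of~$A_1$ into parts $W_v$ of size $w=\lfloor|A_1|/n\rfloor$. Since $m\geq 0.3\,n\sqrt{\log d}$ we have $s\geq\sqrt{\log d}/50$, and since $|A_1|\geq D_6m/(2C_0)$ we have $w\geq c_1D_6\sqrt{\log d}$ for an absolute constant $c_1>0$, so $w/s$ can be made as large as we please. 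The tentative branch set of~$v$ is $U_v=B_v\cup W_v$. Call $vw\in E(H)$ \emph{bad} if $G$ has no edge between $W_v$ and $B_w$ and none between $W_w$ and $B_v$; every edge of~$H$ that is not bad is automatically realised by the pair $U_v,U_w$. Standard first-moment and Chernoff estimates show that, on a typical outcome, each $a\in W_v$ has a neighbour in $B_v$ and all but a negligible fraction of $B_v$ has a neighbour in~$W_v$, so that the graphs $G[U_v]$ are almost connected; the few strays, and the bad edges, are to be repaired at the end using the reserve, since to connect $U_v$ to $U_w$ it suffices to add to $U_v$ a vertex of~$A_0$ adjacent both to a vertex of~$B_v$ and to a vertex of~$B_w$, and such vertices will be plentiful and choosable disjointly, much as in Lemma~\ref{T:connectpartition}.

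Everything therefore rests on bounding the number of bad edges, and this is the step I expect to be the main obstacle. For a fixed $vw$, having no $W_v$--$B_w$ edge means $W_v\subseteq A_1\setminus\bigcup_{b\in B_w}N(b)$, so $\Pr[vw\text{ bad}]\leq\bigl(|A_1\setminus\bigcup_{b\in B_w}N(b)|\,/\,|A_1|\bigr)^{w}$. Because each vertex of~$A$ lies in $N(b)$ for at least a $1/1200$-fraction of the vertices~$b$, a random $s$-set $B_w$ leaves uncovered, \emph{in expectation}, only an $e^{-\Theta(s)}=e^{-\Theta(\sqrt{\log d})}$-fraction of~$A_1$; and \emph{if} this holds not merely in expectation but with probability $1-d^{-\Omega(1)}$, then $\Pr[vw\text{ bad}]\leq\bigl(e^{-\Theta(\sqrt{\log d})}\bigr)^{w}=e^{-\Theta(w\sqrt{\log d})}=e^{-\Theta(\log d)}=d^{-\Omega(1)}$ --- and here it is crucial that $w$ itself has order $\sqrt{\log d}$, so that $w\sqrt{\log d}=\Theta(\log d)$. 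With the hidden constant growing with~$D_6$ one then gets $\Pr[vw\text{ bad}]\leq d^{-3}$, hence fewer than $e(H)d^{-3}<n$ bad edges in expectation, and the repair step finishes the construction. The hard part is exactly the passage from expectation to a polynomial tail: a first-moment bound alone gives only a weak Markov estimate, and the tail can be large only when many vertices of~$A$ share almost the same neighbourhood in~$B$ --- a ``clustered'' situation in which one instead locates a large (nearly) complete bipartite subgraph $K_{A^*,B^*}$ of~$G$ and embeds~$H$ into it directly: with $I$ a maximum independent set of~$H$ and $J=V(H)\setminus I$, assign distinct vertices of~$B^*$ to the vertices of~$J$, a single vertex of~$A^*$ to each vertex of~$I$, and one further vertex of~$A^*$ to each vertex of~$J$; since $I$ is independent, every edge of~$H$ then joins an $A^*$-vertex to a $B^*$-vertex of the relevant branch sets, so is realised. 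Making this dichotomy quantitative --- so that either the random partition has few bad edges or the needed complete bipartite subgraph is present --- for instance through a second-moment estimate whose error term is governed by the largest near-complete bipartite subgraph of~$G$, is the technical heart of the proof.
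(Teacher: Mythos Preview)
Your approach is fundamentally different from the paper's, and the gap you yourself flag --- converting the expectation bound on the uncovered fraction of~$A_1$ into a polynomial tail, or else extracting a large complete bipartite subgraph --- is a genuine obstacle, not a routine step. The indicators $\{N(a)\cap B_w=\emptyset\}_{a\in A_1}$ can be arbitrarily highly correlated (whenever many $a$'s share nearly the same neighbourhood), so the uncovered fraction is \emph{not} concentrated in general: one really does get $\mathbb{E}\bigl[(1-|A_w^*|/|A_1|)^w\bigr]$ of order $e^{-\Theta(s)}=e^{-\Theta(\sqrt{\log d})}=d^{-o(1)}$ rather than $d^{-\Omega(1)}$, which leaves roughly $nd^{1-o(1)}\gg n$ expected bad edges. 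Your fallback --- locating a near-complete bipartite $K_{A^*,B^*}$ and embedding $H$ via a maximum independent set --- then has to be made quantitative against intermediate structures (several overlapping clusters, none individually large enough), and the repair step for bad edges can also fail for exactly those $B_v$ that made the edge bad in the first place. None of this is visibly impossible, but it is a substantial structural argument that your proposal leaves entirely open.

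The paper sidesteps the whole construction with a short contraction trick. Process the vertices of $A$ one at a time, contracting each $a$ onto a vertex of \emph{minimum} degree in $G^*[N(a)]$, where $G^*$ is the current graph on~$B$; if $G^*[N(a)]$ has density $1-q_a$, this adds at least $q_a(m/6-1)$ edges inside~$B$. Since at most $\binom{|B|}{2}\le(100m)^2/2$ edges can be added altogether and $|A|\ge D_6m$, for $D_6$ large some $q_a$ must drop below a fixed small constant~$q$. At that moment $G^*[N(a)]$ is a graph on at least $m/6$ vertices of density at least $1-q$ (hence $\kappa\ge|G^*[N(a)]|/2$), and with $q$ chosen so that $(0.3/6)\sqrt{-\log(2q)}\ge 1$ one has $m/6\ge|H|\sqrt{\log_{1/2q}d}$; Theorem~\ref{T:sectwoweak} applied with $p=1-2q$, $\epsilon=q$ then gives $G\succ G^*[N(a)]\succ H$ directly. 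Thus the lemma is reduced in a few lines to the already-proved dense case, with no probabilistic analysis at all.
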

\begin{proof}
  Begin by choosing a number $0<q<1/5$ such that
  $\epsilon \sqrt{-\log(2q)}\geq 1$. This implies that
  $n\geq|H|\sqrt{\log_{1/2q}d}$. Then take $D_6$ so that
  $D_6q\ge C^2$.

  Proceed by contracting, one by one, each vertex $a\in A$ to a vertex
  $b\in B$ of minimal degree in $G^*[N(a)]$, where $G^*$ is the graph we
  have at the moment we contract $ab$. We imagine that $a$ disappears in
  this process but~$b$ remains, and after we have done this for all
  $a\in A$ we are left with a graph on vertex set~$B$. Note that, if
  $G^*[N(a)]$ has minimum degree $p_a(|N(a)|-1)$, and $q_a=1-p_a$, then we
  add at least $q_a(n-1)$ edges to~$B$. Suppose that $q_a\ge q$ for every
  $a\in A$. Then we have added at least $|A|q(n-1)$ edges. But we can add
  at most ${|B|\choose2}$, so $2|A|q(n-1)\leq |B|(|B|-1)$. However, this
  inequality fails by our choice of $D_6$.

  It follows that, for some $a\in A$, $q_a\le q$ holds. Let
  $G'=G^*[N(a)]$. Then $G'$ has minimum degree at least $p_a(|G'|-1)$,
  and since $q_a\leq 1/5$ we have $p_a\geq 4/5$ so $\kappa(G')\geq |G'|/2$.
  Moreover $|G'|\geq n\geq |H|\sqrt{\log_{1/2q}d}$.  Now $G'$ has density
  at least $(1-2q)+q$, so we can apply Theorem~\ref{T:sectwoweak} to $G'$
  with parameters $\epsilon=q$ and $p=1-2q$ (increasing $D_6$ if necessary
  so that $D_6\geq D_1(q)$), to obtain $G\succ G'\succ H$.
\end{proof}

We can now prove Theorem \ref{T:secthree}. We restate first it, for convenience.

\secthreethm*

\begin{proof}
   Let $\ell=\ceil{8/\epsilon}$ and $L={\ell\choose2}$.
   We start by finding, one by one, disjoint sets $S_0,\ldots,S_L$ such that
   $$
   \abs{S_i}\leq 6\enklet{} \quad \mbox{and}\quad
   \delta(G[S_i])> \frac{5}{6}\enklet{}-1\,, \quad\mbox{for } 0\le i\le L\,.
   $$
   Suppose we have already found sets $S_0,...,S_{k-1}$, where $k\leq L$.
   Let $B = \bigcup_{0\leq i < k}S_i$. Then $\abs{B}\leq 6L\enklet{}$. Let
   $A$ be the set of vertices in $V(G)-B$ having degree at
   most~$6\enklet{}$.  Then
   $3\enklet{}\abs{G-B-A}\leq e(G)\leq \enklet{}\abs{G}$.  In particular,
   $\abs{G-B-A}\leq \abs{G}/2$. Assuming, as we may, that $D_2\geq 36L$,
   then $|B|\leq |G|/6$, so $|A|\geq |G|/3$.  Let $n=\enklet{}/6$ and let
   $C=36L$, so $n\geq (\epsilon/6)|H|\sqrt{\log d}$ and $|B|\le Cn$.  We
   may also assume that $D_2\ge D_6(\epsilon/6,C)/2$, where $D_6$ is the
   constant in Lemma~\ref{T:largebipminor}, and so $|A|>D_6n$. Then, by
   that lemma, either $G\succ H$, in which case the theorem is proved and
   we are done, or some vertex $a\in A$ has fewer than $n=\enklet{}/6$
   neighbours in~$B$. In this case put $S_k=\Gamma(a)\setminus B$.  Then
   $|S_k|\leq 6\enklet{}$, since $a\in A$. Moreover, because each edge
   incident with $a$ lies in more than $\enklet-1$ triangles, we have
   $\delta(G[\Gamma(a)])\geq m-1$ and so $\delta(G[S_i])>
   5\enklet{}/6-1$. We thus find all our sets $S_0,\ldots,S_L$.

   Next, inside each $S_i$ we find a subset $T_i$ with
   $\kappa(G[T_i])\geq \enklet{}/40$ and \\$\delta(G[T_i])\geq3\enklet{}/4$.
   If $\kappa(G[S_i])\geq\enklet{}/40$, just put $T_i=S_i$. If not, remove a
   cutset of size at most $\enklet{}/40$ from $G[S_i]$ and let $S_i'$ be the
   set of vertices of a smallest component. Then $|S_i'|\leq|S_i|/2\le
   3\enklet$ and
   $\delta(G[S_i'])\geq 5\enklet{}/6-1-\enklet{}/40$. If
   $\kappa(G[S_i'])\geq \enklet{}/40$, put $T_i=S_i'$. Otherwise,
   repeat the procedure on $G[S_i']$. After $j$ repetitions of the procedure
   we have a subgraph with at most $6\enklet/2^j$ vertices and minimum
   degree at least $5\enklet{}/6-1-j\enklet{}/40$. This is impossible for
   $j=4$, so we reach the desired $T_i$ in at most 3~steps.

   Now let $V_1,...,V_\ell$ be an arbitrary equipartition of $H$, and let
   $H^{\{i,j\}} = H[V_i\cup V_j]$. Each subgraph $H^{\{i,j\}}$ has at most
   $2|H|/\ell+2\leq \epsilon|H|/4+2$ vertices, and average degree at most
   $\ell d$ (if $D_2$ is large). Notice that $H$ is the (not edge-disjoint)
   union of the $L = \binom{\ell}{2}$ subgraphs $H^{\{i,j\}}$.  We shall find the
   $H^{\{i,j\}}$ as minors inside the $G[T_i]$, and with this in mind we
   relabel $T_1,\ldots,T_L$ as $T^{\{i,j\}}:1\leq i<j\leq \ell$; we shall
   find $H^{\{i,j\}}$ in $G[T^{\{i,j\}}]$.

   We now describe how the $H$ minor will be formed, leaving the details of
   the construction to later. The minor will be rooted at a set of roots
   $R\subset T_0$, which we pick now and label as
   $R=\{r_v:v\in V(H)\}$. The $H^{\{i,j\}}$ minors also need to be rooted,
   so pick now a set of roots
   $R^{\{i,j\}}=\{r^{\{i,j\}}_v: v\in V_i\cup V_j\}\subset T^{\{i,j\}}$. For
   each pair $\{i,j\}$ and for each $v\in V_i\cup V_j$, we find an
   $r_v$--$r^{\{i,j\}}_v$ path $P^{\{i,j\}}_v$, such that all the paths
   $P^{\{i,j\}}_v$ are internally disjoint from each other and from all the
   $H^{\{i',j'\}}$ minors. Then the paths $P^{\{i,j\}}_v$ with the minors
   $H^{\{i,j\}}$ together give an $H$ minor, because every edge of $H$ lies
   in one of the $H^{\{i,j\}}$, and by contracting the
   paths $P^{\{i,j\}}_v$ we identify, for each $v\in V(H)$, all the root
   vertices $r^{\{i,j\}}_v$ that are labelled by~$v$.

   Here are the constructional details. In practice, to avoid the paths
   $P^{\{i,j\}}_v$ intersecting the minors $H^{\{i',j'\}}$, we construct the
   paths first, then remove them and find the minors in the remaining
   graph. Each vertex $v$ lies in $\ell-1$ sets $V_i\cup V_j$, so
   $R^*=\bigcup_{\{i,j\}} R^{\{i,j\}}$ satisfies
   $|R^*|=(\ell-1)|H|$. Note that, if $D_2$ is large, then
   $\kappa(G-R)\geq D_2|H|-|R^*|\geq|R^*|$ and
   $|T_0-R|\geq 3m/4-|H|\geq|R^*|$. Thus by Menger's theorem we can find $|R^*|$
   vertex disjoint paths in $G-R$ joining the set $T_0-R$ to the
   set~$R^*$. Let the path which ends at $r^{\{i,j\}}_v$ be
   $Q^{\{i,j\}}_v$, and let its first vertex be $x^{\{i,j\}}_v\in T_0$. 
   These paths $Q^{\{i,j\}}_v$ might, as they stand, use a lot of vertices
   from the sets $T^{\{i',j'\}}$, so we now modify them so as to avoid this.

   Recall that $|T_0|\le 6\enklet$ and $\kappa(G[T_0])\geq \enklet{}/40$.
   Applying Lemma~\ref{L:connectivity} to $G[T_0]$, we see that each pair
   of vertices is joined by at least $\enklet{}/38400$ paths of length at
   most~$480$ --- we call these ``short'' paths. Exactly the same remark
   applies to each $G[T^{\{i,j\}}]$. We now modify the paths
   $Q^{\{i,j\}}_v$, one by one, in the following way. For each set
   $T^{\{i',j'\}}$ that the path $Q^{\{i,j\}}_v$ enters, let $x$ and $y$ be
   the first and last vertices of $Q^{\{i,j\}}_v$ in $T^{\{i',j'\}}$, and
   replace the section of $Q^{\{i,j\}}_v$ between $x$ and $y$ by a short
   path inside $G[T^{\{i',j'\}}]$. At the moment we don't require these
   short paths to be disjoint for different $i,j,v$, but after doing this
   the paths $Q^{\{i,j\}}_v$ have the property that they enter each
   $T^{\{i',j'\}}$ only once, and use at most $480$ vertices of
   $T^{\{i',j'\}}$. After this is done, note that the first and last
   vertices of the $Q^{\{i,j\}}_v$ in the $T^{\{i',j'\}}$ are all still
   distinct, since they can never lie on the internal vertices of our new
   short paths - they are always vertices of the original path.  Let $E$ be
   the set of first and last vertices of all the new short paths in all the
   $T^{\{i,j\}}$.

   We now go through each $T^{\{i,j\}}$ and replace the short paths used by
   ones that are disjoint from each other and from~$E$. We do this by
   replacing the short paths one at a time, each time choosing a new short
   path disjoint from previously chosen new short paths and disjoint
   from all the endpoints~$E$. We can do this because
   the number of vertices we need to avoid is at most
   $480|R^*| +|E|\leq 482|R^*| < \enklet{}/40000$. The modified paths
   $Q^{\{i,j\}}_v$ are now once again vertex 
   disjoint, each using at most $480$ vertices from each
   $T^{\{i',j'\}}$. Finally, we extend the paths $Q^{\{i,j\}}_v$ to the
   paths $P^{\{i,j\}}_v$ that we want, by finding $|R^*|$ internally
   disjoint short paths inside $G[T_0]$ that join $r_v$ to $x^{\{i,j\}}_v$,
   for all $v\in V(H)$ and all appropriate~$\{i,j\}$.

   What remains is to find the $H^{\{i,j\}}$ minors. Fix some
   pair~$\{i,j\}$. Let $X$ be the set of vertices on the paths
   $P^{\{i',j'\}}_{v'}$ that lie inside~$T^{\{i,j\}}$, other than the roots
   $R^{\{i,j\}}$. By the choice of these paths, we have
   $|X|\le 480|R^*|< 480\ell|H|$. Let $G'=G[T^{\{i,j\}}-X]$. Then it is
   enough to find an $H^{\{i,j\}}$ minor in $G'$ that is rooted at
   $R^{\{i,j\}}$. Recall that $H^{\{i,j\}}$ has at most $\epsilon|H|/4+2$
   vertices, and average degree at most $\ell d$. Since $H$ itself has
   average degree~$d$, it is possible to add edges to $H^{\{i,j\}}$, if
   necessary, so that the resultant graph $H'$ has average degree $d'$
   where $\epsilon d/5\leq d'\leq \ell d$ (if $D_2$ is large). It is now
   enough to find $H'$ as a minor of $G'$, rooted at $R^{\{i,j\}}$.

   Recalling the properties of the~$T_i$, we see that $|G'|\leq 6m$,
   $\delta(G')\geq 3\enklet{}/4-|X|$ and $\kappa(G')\geq \enklet{}/40-|X|$.
   If $D_2$ is large, this means $\delta(G')> 2\enklet{}/3\geq |G'|/9$ and
   $\kappa(G')\geq \enklet{}/50\geq |G'|/300$. Let $\epsilon'=1/300$.  Then
   we can pick~$p\ge 1/10$ such that $\epsilon'<p<1-\epsilon'$ and $G'$ has
   density at least $p+\epsilon'$. Moreover, $\delta(G')\geq 2\enklet{}/3$
   implies we can choose $p+\epsilon' \geq 2\enklet{}/3|G'|$, and since
   $\epsilon'\leq m/50|G'|$ we have $p\geq 3\enklet{}/5|G'|$. Recalling that
   $|H'|\leq \epsilon|H|/4+2$, and that $\epsilon d/5\leq d'\leq \ell d$, we
   obtain, if $D_2$ is large,
   $$
   |G'| \geq \frac{3\enklet{}}{5p}\geq \frac{3\epsilon}{5p}
   |H|\sqrt{\log d}\geq \frac{11}{5p}|H'|\sqrt{\log d}
   \geq \frac{2}{p}|H'|\sqrt{\log d'}\,.
   $$
   By the definition of~$\alpha>1/4$, this implies
   $$
   |G'| \geq 4\frac{\sqrt{\log(1/(1-p))}}{p/2}
   |H'|\sqrt{\log_{1/(1-p)} d'} > |H'|\sqrt{\log_{1/(1-p)} d'}\,.
   $$
   Because $d'>\epsilon d/5$ we can ensure that $d'>D_1(\epsilon')$ by
   making $D_2$ large. We now have all the conditions we need to conclude,
   from Theorem~\ref{T:sectwostrong}, that $H'$ is a minor of $G'$ rooted
   at $R^{\{i,j\}}$, as required.
\end{proof}

\section{Further extensions}\label{secfuture}

As mentioned in the introduction, Myers and Thomason~\cite{myersthom}
defined a graph parameter $\gamma(H)$ and proved that
$c(H)=(\alpha\gamma(H)+o(1)) t\sqrt{\log t}$ for graphs $H$ with $t$
vertices and at least $t^{1+\tau}$ edges. The parameter $\gamma(H)$ is
found by considering non-negative vertex weightings $w:V(H)\to{\mathbf
  R}^+$, and is given by
$$
\gamma(H)\,=\,\min_w\,{1\over t}\,\sum_{u\in H}w(u)
\mbox{\qquad such that\qquad}
\sum_{uv\in E(H)}\,t^{-w(u)w(v)}\,\le\,t\,.
$$
The constant weighting $w(v)=\sqrt \tau$ satisfies the constraints, so
$\gamma(H)\le\sqrt\tau$ always, but in general the relative sizes of the vertex
weights are, essentially, the relative sizes of the sets $|U_v|$ that are most
likely to give an $H$ minor in a random graph (this is where the parameter
comes from). For most $H$, and for regular $H$ when $\tau>0$, the optimal
sizes of the $|U_v|$ are the same, so $\gamma(H)\approx\sqrt\tau$ and
$c(H)=(\alpha+o(1)) |H|\sqrt{\log d}$.  But there are natural examples
where taking equal sized sets $|U_v|$ is not optimal. For example, the
complete bipartite graph $K_{\beta t, (1-\beta)t}$ satisfies
$\gamma(K_{\beta t, (1-\beta)t})\approx 2\sqrt{\beta(1-\beta)}$.

When $H$ is sparse, examples such as the hypercube cited in the
introduction suggest that $\gamma(H)$ is not enough on its own to determine
$c(H)$. Nevertheless we think that Theorem~\ref{T:key} can be extended to
incorporate some features of $\gamma(H)$. For example, if $\beta$ is fixed,
the proof of Theorem~\ref{T:key} could probably be modified to show that,
if $d$ is large and $H$ is a bipartite graph with average degree~$d$,
having vertex class sizes $\beta|H|$ and $(1-\beta)|H|$, then
$c(H)\le (2\sqrt{\beta(1-\beta)}\alpha+o(1))|H|\sqrt{\log d}$.  Moreover,
the argument of~\cite{NRTW} indicates that equality holds for almost all
such~$H$. Similar remarks could be made regarding multipartite graphs.
With this in mind, we have, as mentioned earlier, stated
Theorem~\ref{T:connectpartition} in a form more general than what is needed
in this paper, but other than that we do not pursue any details of these
ideas here.

\bibliography{extremalbib.bib}
	
\bibliographystyle{plain}

\end{document}